\newtheorem{theorem}{Theorem}[section]
\newtheorem{proposition}[theorem]{Proposition}
\newtheorem{lemma}[theorem]{Lemma}
\newtheorem{remark}[theorem]{Remark}
\newcommand{\Ss}{\mathbb{S}}
\newcommand{\NN}{\mathbb{N}}
\newcommand{\tT}{\mathrm{T}}
\DeclareMathOperator*{\argmin}{arg\,min}
\DeclareMathOperator{\grad}{grad}
\DeclareMathOperator{\sgn}{sgn}
\DeclareMathOperator{\tr}{tr}
\DeclareMathOperator{\Log}{Log}
\DeclareMathOperator{\Exp}{Exp}
\title{Restoration of Manifold-Valued Images by Half-Quadratic Minimization}
\author{
Ronny Bergmann\thanks{University of Kaiserslautern, Dept. of Mathematics, Paul-Ehrlich-Str.~31, 67663 Kaiserslautern, Germany, $\{$bergmann, persch, steidl$\}$@mathematik.uni-kl.de},%
\and
Raymond H. Chan\thanks{Chinese University of Hong Kong, Dept. of Mathematics, Hong Kong, China},%
\and
Ralf Hielscher\footnote{University of  Chemnitz, Faculty of  Mathematics, Reichenhainer Str. 39, 09107 Chemnitz, Germany, ralf.hielscher@mathematik.tu-chemnitz.de
},%
\and
Johannes Persch\footnotemark[1],
\and
Gabriele Steidl\footnotemark[1]
}
\date{\today}
\begin{document}
\maketitle

%-----------------------------------------------------------------------------
\begin{abstract}
\noindent
The paper addresses the generalization of the half-quadratic minimization method for the restoration of
images having values in a complete, connected Riemannian manifold. 
We recall the half-quadratic minimization method using the notation of the $c$-transform
and adapt the algorithm to our special variational setting. We prove the convergence of the method for
Hadamard spaces. 
Extensive numerical examples for images with values on spheres, in the rotation group $\operatorname{SO}(3)$, and
in the manifold of positive definite matrices demonstrate the excellent performance of the algorithm.
In particular, the method with $\operatorname{SO}(3)$-valued data
shows promising results for the restoration of images obtained
from Electron Backscattered Diffraction which are of interest in material science.
\end{abstract}

%-------------------------------------------------------------------------------
\section{Introduction} \label{sec:intro}
%-------------------------------------------------------------------------------
Many edge-preserving variational methods for the denoising or inpainting of real-valued images utilize the following model:
let ${\mathcal G} \coloneqq \{1,\ldots,n\} \times \{1,\ldots,m\}$ be the image grid,
$\emptyset \not = {\mathcal V} \subseteq {\mathcal G}$ and 
$
{\mathcal N}(i)^+ \coloneqq\bigl\{ (i_1+1,i_2),(i_1,i_2+1) \bigr\}
$ the set of right and upper neighbors of pixel $i \in {\mathcal G}$,
where we suppose mirror boundary conditions.
From corrupted image values $f\colon {\mathcal V} \rightarrow {\mathbb R}$
we want to restore the original image 
$u_0\colon {\mathcal G} \rightarrow {\mathbb R}$
as a minimizer of one of the following energy functionals
\begin{align}
  \frac12  \sum_{i \in {\mathcal V}} (f_i - u_i)^2 &+ \lambda  \sum_{i \in {\mathcal G}} \sum_{j \in {\mathcal N}(i)^+ } \varphi(\lvert u_i - u_j\rvert), \label{TV_aniso}\\
  \frac12 \sum_{i \in {\mathcal V}} (f_i - u_i)^2  &+ \lambda   \sum_{i \in {\mathcal G}} \varphi \biggl( \Bigl( \sum_{j \in {\mathcal N}(i)^+ } (u_i-u_j)^2\Bigr)^{\frac12} \biggr)\label{TV_iso},
\end{align}
where $\lambda >0$ is a regularization parameter and $\varphi\colon \mathbb R_{\ge 0} \rightarrow \mathbb R_{\ge 0}$.
For ${\mathcal V} = {\mathcal G}$ this is a typical denoising model in the presence of additive Gaussian noise.
Otherwise, the model can be used for inpainting the missing image values in ${\mathcal G} \backslash {\mathcal V}$.
Throughout this paper,  
%prefer to consider $\varphi$ as a function defined on the whole real axis by setting 
%$\varphi(t) \coloneqq \varphi(-t)$ for $t < 0$. In other words, 
we consider even functions 
$\varphi\colon\mathbb R \rightarrow \mathbb R_{\ge 0}$.
For $\varphi(t) \coloneqq \lvert t\rvert$, the models~\eqref{TV_aniso}
and~\eqref{TV_iso} are discrete variants 
of the anisotropic and isotropic Rudin-Osher-Fatemi model~\cite{ROF92}, respectively.
Then the regularization term is often referred as anisotropic/isotropic discrete total variation (TV) regularization
in resemblance to its functional analytic counterpart.
\begin{remark}
More generally one may consider ${\mathcal G}$  as vertices of a graph 
with edge set
$
{\mathcal E} \coloneqq \bigl\{ (i,j): i \in {\mathcal G}, j \in {\mathcal N}(i) \bigr\}
$ 
for some appropriate neighborhoods ${\mathcal N}(i)$. 
This is for example useful
in nonlocal means approaches.
Then the regularizing term sums over the edge set ${\mathcal E}$.
For simplicity we restrict our attention to the special neighborhoods ${\mathcal N}(i)^+$ here.
\end{remark}
Starting with the inaugural work~\cite{GR92,GY95}, a huge number of papers
have examined the so-called half-quadratic minimization methods for solving the above restoration problems
with various functions $\varphi$ 
as well as other optimization problems. We only mention the ARTUR algorithm in~\cite{CBAB97}.
%, see, e.g.,~\cite{CBAB97,Idier01,KDA99}.
Basically, the original problem is reformulated into an augmented one which 
is quadratic with respect to the image and separable with respect to additional auxiliary variables.
Then an alternating minimization process is applied whose steps 
allow an efficient computation.
Half-quadratic minimization is connected with other well-known minimization approaches.
We only mention the relation to EM algorithms~\cite{CI04}, quasi-Newton minimization~\cite{AIG06,NC07}
and gradient linearization algorithms~\cite{NC07}.
A gradient linearization method was used in particular in~\cite{VO96,VO98} 
to minimize an approximate total variation regularization~\eqref{TV_iso} with
$\varphi(t) \coloneqq \sqrt{t^2 + \varepsilon^2}$ for $\varepsilon \ll 1$. 
It was called the ``lagged diffusivity fixed point iteration''
and the authors mention that it amounts 
to apply the multiplicative form of half-quadratic minimization to this $\varphi$.
Finally, there is a relation to iteratively reweighted least squares methods~\cite{Lawson61,DDG10}.
For a convergence analysis of half-quadratic minimization methods for convex functions $\varphi$ 
we refer to~\cite{CBAB97,NN05} and for weaker convergence results for nonconvex $\varphi$ to~\cite{DB98}.

In many applications signals or images having values in a manifold are of interest.
Circle-valued images 
appear in interferometric synthetic aperture radar~\cite{BRF00,DDT11}
and various applications involving the phase of Fourier transformed data.
Images with values in $\mathbb S^2$ play a role when dealing 
with 3D directional information~\cite{KS02,LO14,VO02}
or in the processing of
color images in the chromaticity-brightness (CB) setting~\cite{CKS01}. 
The motion group and the rotation group $\operatorname{SO}(3)$ were considered in
tracking, (scene) motion analysis~\cite{RS10,rosman2012group,TPM08} and in the analysis
of back scatter diffraction data~\cite{BaHiSc11}.
Finally, images with values in the manifold of positive definite matrices appear in 
DT-MRI~\cite{pennec2006riemannian,SSPB07,WFWBB06,WFBW03} 
and whenever covariance matrices are adjusted to image pixels, see, e.g.,~\cite{TPM08}.

Recently a TV-like model for circle-valued images was introduced in~\cite{SC11,SC13}.
For manifold-valued image restoration such an approach was proposed  in~\cite{LSKC13},
where the problem was reformulated as a multilabel optimization problem
which was handled using convex relaxation techniques. 
Another method suggested in~\cite{WDS14} 
employs cyclic and parallel proximal point algorithms 
and does not require labeling and relaxation techniques.
This approach was generalized by including second order differences  for circle-valued images in~\cite{BLSW14,BW15a},
for coupled circle and real-valued images in~\cite{BW15b}
and for Riemannian manifolds in~\cite{BBSW15}.
A restoration method which circumvents the direct work with manifold-valued data by
embedding the matrix manifold in the appropriate Euclidean space and applying
a back projection to the manifold was suggested in~\cite{RTKB14}.
Recently, an iteratively reweighted least squares method for the restoration of manifold-valued images was suggested in~\cite{GS14}.
This method can be seen as multiplicative half-quadratic minimization method for the special function
$\varphi(t) \coloneqq \sqrt{t^2 + \varepsilon^2}$.

In this paper we adopt the idea of half-quadratic minimization for general functions $\varphi$
for the restoration of manifold-valued images.
We prefer the notation of the $c$-transform known from optimal transport to recall
the basic half-quadratic minimization approach.
Then we describe the algorithm for our problems of denoising or inpainting
of manifold-valued images both in the anisotropic and isotropic case.
Here we focus on the multiplicative half-quadratic minimization method.
Convergence of the algorithm can be shown for images having entries in an Hadamard space.
The manifold of positive definite matrices is such an Hadamard manifold.
We provide several applications of the algorithm as the denoising of phase-valued images,
the restoration of color images with disturbed  chromaticity
or of 3D directions, and the improvement of images obtained from electron backscatter diffraction of
a Magnesium sample. 

The outline of the paper is as follows:
In Section~\ref{sec:model} we  propose our variational model and show how to handle it by
the half-quadratic minimization approach.
A convergence proof for the algorithm applied on Hadamard manifolds is given in Section~\ref{sec:hada}. 
Section~\ref{sec:num} shows various numerical examples.
Finally, Appendix~\ref{app:proofs} contains the proofs and Appendix~\ref{app:B} lists 
special quantities necessary for the numerical computations.
%-------------------------------------------------------------------------------
\section{Half-Quadratic Minimization} \label{sec:model}
%-------------------------------------------------------------------------------
Let $\mathcal M$ be a complete, connected $n$-dimensional Riemannian manifold 
with geodesic distance
$d\colon \mathcal M \times \mathcal M \rightarrow \mathbb R_{\ge 0}$.
Now we consider manifold-valued images. More precisely,
from corrupted image values $f\colon {\mathcal V} \rightarrow {\mathcal M}$
we want to restore the original manifold-valued image
$u_0: {\mathcal G} \rightarrow {\mathcal M}$
as a minimizer of one of the following energy functionals
\begin{align}
 J_1(u) &\coloneqq \frac12 \sum_{i \in {\mathcal V} } d^2(u_i,f_i) + 
 \lambda  \sum_{i \in {\mathcal G}} \sum_{j \in {\mathcal N}(i)^+ } \varphi\bigl(d(u_i,u_j)\bigr), \label{functional_1} \\
 J_2(u) &\coloneqq \frac12 \sum_{i \in {\mathcal V} } d^2(u_i,f_i) + 
 \lambda  \sum_{i \in {\mathcal G}} \varphi \biggl( \Bigl( \sum_{j \in {\mathcal N}(i)^+ } d^2(u_i,u_j) \Bigr)^{\tfrac12} \biggr) \label{functional_2}
\end{align}
with $\lambda >0$ being a regularization parameter and $\varphi\colon \mathbb R_{\ge 0} \rightarrow \mathbb R_{\ge 0}$.
As before we set $\varphi(t) \coloneqq \varphi(-t)$ for $t<0$ and consider $\varphi$ as a function defined on the whole real axis.
For $\varphi(t) \coloneqq \lvert t\rvert$, the second sum in the regularizer of $J_\nu$ is just $\bigl\lVert\bigl(d(u_i,u_j) \bigr)_{j \in {\mathcal N}(i)^+} \bigr\rVert_\nu$, $\nu \in \{ 1,2 \}$.
Then $J_1$ resembles the setting in~\cite{WDS14} and is related to the anisotropic ROF functional  \eqref{TV_aniso}
and $J_2$ gives the approach~\cite{LSKC13}  related to the isotropic case \eqref{TV_iso}.
In this paper we will consider smooth regularization terms, i.e., even, differentiable functions $\varphi$.
We will compute a minimizer of $J_\nu$, $\nu \in \{ 1,2 \}$, by half-quadratic minimization methods.

In the following, we briefly recall the reformulation idea of half-quadratic minimization using the concept of the $c$-transform
and apply it to \eqref{functional_1} and \eqref{functional_2}.
The $c$-transform of functions defined on metric spaces is used 
in connection with optimal transport problems see, e.g., \cite[p. 86f]{Villani03} and seems also to be an appropriate
approach here.
Given a function $c: \mathbb R \times \mathbb R \rightarrow \mathbb R$, 
the $c$-transform of a function $\varphi: \mathbb R \rightarrow \mathbb R$
is defined by 
\[
\varphi^c (s) \coloneqq \inf_{t \in \mathbb R} \bigl\{ c(t,s) - \varphi(t) \bigr\}.
\]
By this definition we see that
$
\varphi(t) + \varphi^c (s) \le c(t,s)
$.
For $c(t,s) \coloneqq -st$ we have $\varphi^c = -(-\varphi)^*$ with the Fenchel transform 
defined for $h:\mathbb R \rightarrow \mathbb R$ as
\[
h^* (s) \coloneqq \sup_{t \in \mathbb R} \bigl\{ ts - h(t) \bigr\}.
\]
Recall that 
$
h^{**}  = h
$ 
if and only if $h$ is lower semi-continuous (lsc) and convex.
In the {\it multiplicative} and {\it additive} half-quadratic methods we use the functions
\begin{align} \label{mult_c}
c(t,s) &\coloneqq t^2 s, \qquad \qquad \qquad \qquad \qquad \; \; \; {\rm (multiplicative)} \\
c(t,s) &\coloneqq \frac12 \Bigl(\sqrt{a}\ t - \frac{1}{\sqrt{a}} s \Bigr)^2,  \quad a>0, \quad{\rm (additive)} \label{add_c}
\end{align}
respectively. The multiplicative setting was introduced in~\cite{GR92} and the
additive one in~\cite{GY95}. The quadratic cost function \(c(t,s)\) in the
additive setting is also handled in optimal transport topics. Note that
the cost function \(c(t,s)\) in the multiplicative case is not bounded from
below in $s$. The following proposition is crucial for the half-quadratic reformulation.
%
%-------------------------------------------------------------------------------
\begin{proposition} \label{prop_1}
Let $\varphi\colon \mathbb R \rightarrow \mathbb R_{\ge 0}$ be an even, differentiable function
and let $c\colon \mathbb R \times \mathbb R \rightarrow \mathbb R$ be given by \eqref{mult_c}
and \eqref{add_c}, respectively.
\begin{itemize}
\item[{\rm i)}] If the functions 
\begin{align} \label{phi_mult}
\Phi (t) &\coloneqq \begin{cases}
 -\varphi(\sqrt{t}) &\text{for } t \ge 0,\\
 +\infty &\text{for } t < 0,
\end{cases}
\quad {\rm (multiplicative)}
\\
\Phi (t) &\coloneqq \frac12 a t^2 - \varphi(t), \qquad \qquad \qquad {\rm (additive)} \label{phi_add}
\end{align}
respectively, are convex, then 
$
\varphi = \varphi^{cc},
$
i.e., setting $\psi(s) \coloneqq \varphi^c(s)$ one has
\begin{align} \label{dual_1}
 \psi(s) = \inf_{t \in \mathbb R} \bigl\{c(t,s) - \varphi(t) \bigr\},\\
 \varphi(t) = \inf_{s \in \mathbb R} \bigl\{c(t,s) - \psi(s) \bigr\} \label{dual_2}.
\end{align}
\item[{\rm ii)}] If in addition to the assumption in {\rm i)} we have
\begin{align} \label{existence_mult}
&\lim_{t \rightarrow \infty} \frac{\varphi(t)}{t^2} \rightarrow 0,  
 \quad \; {\rm (multiplicative)}\\
&\lim_{t \rightarrow \infty} \frac{\varphi(t)}{t^2} < \frac12 a, 
\quad  {\rm (additive)} \label{existence_add}
\end{align}
respectively, and in the multiplicative case also
$\varphi'(t) \ge 0$ for $t \ge 0$ and $\varphi''(0+) \coloneqq \lim_{t \rightarrow 0+} \frac{\varphi'(t)}{t}$ exists,
then the infimum in \eqref{dual_1} and \eqref{dual_2}
is attained for $(t,s) = \bigl(t,s(t)\bigr)$ with
\begin{align} \label{mini_1}
s(t) &\coloneqq \begin{cases}
  \frac{\varphi'(t)}{2t} &\text{for } t > 0,\\[1ex]
  \frac{\varphi''(0+)}{2}&\text{for } t = 0,
\end{cases}
\quad {\rm (multiplicative)}
\\
  s(t) &\coloneqq at - \varphi'(t), \quad  
	\quad \qquad  \quad \quad  {\rm (additive)} \label{mini_2}
\end{align}
respectively, and for these pairs we have $\varphi(t) + \psi(s) = c(t,s)$.
The choice is unique except for the multiplicative case and $t=0$, where any $s$ larger than $\frac{\varphi''(0+)}{2}$
is also a solution.
\item[{\rm iii)}] If in the multiplicative case in addition $\varphi'(t) > 0$ for $t > 0$ and
$\varphi''(0+) > 0$, then $s \in (0,\frac{\varphi''(0+)}{2}]$.
\end{itemize}
\end{proposition}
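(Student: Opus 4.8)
The statement in iii) asserts two things about the quantities $s=s(t)$ from \eqref{mini_1}: they are strictly positive, and they never exceed $\frac{\varphi''(0+)}{2}$. My plan is to treat the lower bound as an immediate consequence of the extra sign hypotheses, and to obtain the upper bound by exploiting the convexity of $\Phi$ from \eqref{phi_mult}, which I expect to be the only substantial point.

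For the lower bound I would argue pointwise. For $t>0$ we have $s(t)=\frac{\varphi'(t)}{2t}$, and since $\varphi'(t)>0$ and $t>0$ this is strictly positive; for $t=0$ we have $s(0)=\frac{\varphi''(0+)}{2}>0$ by the added assumption. Hence $s(t)>0$ for every $t\ge 0$, which gives the strict lower endpoint.

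For the upper bound the key is a direct computation of $\Phi'$. Since $\varphi$ is differentiable, $\Phi(t)=-\varphi(\sqrt{t})$ is differentiable on $(0,\infty)$ with
\[
\Phi'(t)=-\frac{\varphi'(\sqrt{t})}{2\sqrt{t}}=-s(\sqrt{t}).
\]
Convexity of $\Phi$ means $\Phi'$ is non-decreasing on $(0,\infty)$, so $t\mapsto s(\sqrt{t})$ is non-increasing, and therefore $s$ itself is non-increasing on $(0,\infty)$. It then remains to compare with the value at the origin: because $\varphi$ is even we have $\varphi'(0)=0$, so $\lim_{t\to 0+}s(t)=\lim_{t\to 0+}\frac{\varphi'(t)}{2t}=\frac{\varphi''(0+)}{2}=s(0)$. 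A non-increasing function on $(0,\infty)$ attains its supremum as the right-hand limit at $0$, whence $s(t)\le s(0)=\frac{\varphi''(0+)}{2}$ for all $t>0$, with equality at $t=0$. Combining this with the lower bound yields $s\in\bigl(0,\frac{\varphi''(0+)}{2}\bigr]$.

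The main (and essentially only) obstacle is the behaviour at the endpoint $t=0$: $\Phi$ need not be differentiable there, so I would establish monotonicity of $s$ on the open half-line $(0,\infty)$ first and then pass to the limit, using the assumed existence of $\varphi''(0+)$ to identify $\lim_{t\to0+}s(t)$ with $s(0)$ and hence with the supremum of $s$. Everything else reduces to a one-line sign check.
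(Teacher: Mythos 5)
Your argument covers only part iii) of the proposition; parts i) and ii) are the substance of the statement and are not addressed at all. Part i) requires showing $\varphi=\varphi^{cc}$, which the paper does by rewriting the $c$-transform in terms of the Fenchel conjugate: in the multiplicative case one computes $\varphi^c(s)=\inf_{r\ge 0}\{rs-\varphi(\sqrt r)\}=-\Phi^*(-s)$ (using that $\varphi$ is even to reduce to $t\ge 0$ and substituting $r=t^2$), and then the convexity and lower semicontinuity of $\Phi$ give $\Phi=\Phi^{**}$, hence $\varphi^{cc}(\sqrt t)=-\Phi(t)=\varphi(\sqrt t)$; the additive case is analogous with $c(t,s)-\varphi(t)=-st+\Phi(t)+\frac{1}{2a}s^2$. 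Part ii) requires an existence argument --- the growth conditions \eqref{existence_mult}, \eqref{existence_add} make the objective in \eqref{dual_1} coercive for $s\ge 0$ (resp.\ all $s$) --- followed by the first-order condition $0=s-\frac{\varphi'(\sqrt r)}{2\sqrt r}$ (resp.\ $0=at-\varphi'(t)-s$), together with the boundary analysis at $r=0$ that produces the value $\varphi''(0+)/2$ and the non-uniqueness statement for $t=0$. None of this is reducible to the monotonicity computation you give.

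The part you do prove, iii), is correct and is essentially the paper's own argument: the paper likewise observes that concavity of $\varphi(\sqrt t)$ (i.e.\ convexity of $\Phi$) forces its derivative $\varphi'(\sqrt t)/(2\sqrt t)=s(\sqrt t)$ to be decreasing, and combines this with $\lim_{t\to 0+}s(t)=\varphi''(0+)/2$ and the strict positivity of $\varphi'$ to conclude $s\in\bigl(0,\tfrac{\varphi''(0+)}{2}\bigr]$. Your handling of the endpoint (working on the open half-line and passing to the limit, whose existence is guaranteed by the assumed existence of $\varphi''(0+)$) is careful and fine; the remark that $\varphi'(0)=0$ by evenness is not actually needed. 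To make this a proof of the proposition as stated, you must supply i) and ii).
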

%---------------------------------------------------------------
%
Note that in the multiplicative case $\psi(s) = -\infty$ for $s < 0$, so that we can restrict our attention in the infimum in \eqref{dual_2}
to $s \ge 0$. 
Further, the assumption $\varphi'(t) \ge 0$, $t \ge 0$, is in particular fulfilled if $\varphi(t)$  is convex and $\varphi''(0+) \ge 0$.
The proof can be given following for example the lines in~\cite{CBAB97,NN05}.
However, since the assumptions in these papers are slightly different
and the $c$-transform notation is not used, we add the proof 
in the appendix~\ref{app:proofs} to make the paper self-contained.
The functions $\varphi$ which fulfill the conditions in Proposition \ref{prop_1}
and which were used in our numerical test are listed in Table \ref{possible_phi}.
Further examples are collected in~\cite{CBAB97,NN05,NC07}.

In the following we assume that $\varphi$ fulfills the assumptions of Proposition \ref{prop_1}.
Now the idea is to replace $\varphi$ in  \eqref{functional_1}, resp., \eqref{functional_2} 
by the expression in \eqref{dual_2} and to consider
$\min_u J_\nu(u) = \min_u \min_v {\mathcal J}_\nu (u,v)$ with 
\begin{align}
 {\mathcal J}_1(u,v) &\coloneqq \frac12 \sum_{i \in {\mathcal V} } d^2(u_i,f_i) + \lambda  \sum_{i \in {\mathcal G}} \sum_{j \in {\mathcal N}(i)^+ } 
  \Bigl( c\bigl(d(u_i,u_j),v_{i,j} \bigr) - \psi(v_{i,j}) \Bigr) ,
 \label{efunctional_1} \\
 {\mathcal J}_2(u,v) &\coloneqq \frac12 \sum_{i \in {\mathcal V} } d^2(u_i,f_i) + \lambda  \sum_{i \in {\mathcal G}} 
 \Biggl(c\biggl( \Bigl( \sum_{j \in {\mathcal N}(i)^+ } d^2(u_i,u_j) \Bigr)^{\tfrac12},v_{i} \biggr) - \psi(v_{i}) \Biggr),
 \label{efunctional_2}
\end{align}
where we have used the notation $v \coloneqq (v_{i,j})_{i,j\in {\mathcal G}}$ in the anisotropic case and 
$v \coloneqq (v_{i})_{i\in {\mathcal G}}$ in the isotropic case.
By Proposition \ref{prop_1} i), minimizing ${\mathcal J}_\nu$ over $u$ and $v$ gives the same solutions for $u$ as just minimizing
$J_\nu$ over $u$. More precisely we give the following remark.

%
%-------------------------------------------------------------------------------
\begin{remark} \label{rem:critical}
Let us abbreviate
${\rm d}_{i,j} \coloneqq d(u_i,u_j)$, ${\rm d}_u \coloneqq ({\rm d}_{i,j})_{i,j\in {\mathcal G}}$
in the anisotropic case 
and
${\rm d}_i \coloneqq \bigl(\sum\limits_{j \in {\mathcal N}(i)^+ } d^2(u_i,u_j) \bigr)^{\tfrac12}$, ${\rm d}_u \coloneqq ({\rm d}_{i})_{i\in {\mathcal G}}$
in the isotropic case. 
If $\hat u$ is a minimizer of $J_\nu$, $\nu = 1,2$, then
$\bigl(\hat u, s({\rm d}_{\hat u}) \bigr)$
is a minimizer of ${\mathcal J}_{\nu}$, $\nu = 1,2$ and conversely.
In particular, if 
\[
\hat u = \argmin_u {\mathcal J}_\nu \bigl(u, s(\hat u)\bigr)
\]
holds true, then
$\hat u$ is a minimizer of  $J_\nu$.
\end{remark}
%-------------------------------------------------------------------------------
%

Now we can apply an alternating minimization over $v_{i,j}\in \mathbb R$, resp., $v_i\in \mathbb R$ and $u \in {\mathcal M}^{n\times m}\eqqcolon \mathrm{M}$:
\begin{align}
  v^{(k+1)} &\in  \argmin_v {\mathcal J}_\nu(u^{(k)},v) \label{step_2},\\
  u^{(k+1)} &\in \argmin_u {\mathcal J}_\nu (u,v^{(k+1)}) \label{step_1}.
\end{align}
Clearly, under the assumptions of Proposition \ref{prop_1} we have
\begin{equation} \label{rel_J_J}
 {\mathcal J}_\nu\bigl(u^{(k)},v^{(k+1)}\bigr) = J_\nu \bigl(u^{(k)}\bigr).
\end{equation}
We want to work with the differentiable function $d^2$ in the second iteration \eqref{step_1}.
Since the additive reformulation leads to a non-differentiable function $d^2+sd$,
we restrict our attention in the following to the multiplicative case. 

\paragraph{Minimization with respect to $v$.}
The minimization over $v$ in \eqref{step_2} can be done separately for the
$v_{i,j}$ or $v_i$.
By Proposition \ref{prop_1} a minimizer is given by 
\[
	v^{(k+1)} \coloneqq s \bigl({\rm d}_{u^{(k)}} \bigr),
\]
where $s$ is defined as in \eqref{mini_1}.
Note that only for $d\bigl(u_i^{(k)}, u_j^{(k)}\bigr) = 0$ in the anisotropic case 
and
$\sum\limits_{j \in {\mathcal N}(i)^+ } \Bigl(d^2\bigl(u_i^{(k)},u_j^{(k)}\bigr) \Bigr)^{\tfrac12} = 0$ 
in the isotropic case a larger value also could be taken as a minimizer.

If $\varphi$ fulfills the assumptions of Proposition \ref{prop_1} iii),
then $v^{(k+1)} \in \bigl(0,\varphi''(0+)/2\bigr]$.
\paragraph{Minimization with respect to $u$.} 
The minimization over $u$ in \eqref{step_1} is equivalent to finding the minimizer of
\begin{align}
  {\mathcal J}_{1,v^{(k)}} (u)&\coloneqq \frac12 \sum_{i \in {\mathcal V} } d^2(u_i,f_i) 
	+ \lambda  \sum_{i \in {\mathcal G}} \sum_{j \in {\mathcal N}(i)^+ } 
 d^2(u_i,u_j) v_{i,j}^{(k)}  ,
 \label{efunctional_1_u} \\
 {\mathcal J}_{2,v^{(k)}} (u) &\coloneqq \frac12 \sum_{i \in {\mathcal V} } d^2(u_i,f_i) + \lambda  \sum_{i \in {\mathcal G}} 
 \Bigl( \sum_{j \in {\mathcal N}(i)^+ } d^2(u_i,u_j) \Bigr) v_{i}^{(k)},
 \label{efunctional_2_u}
\end{align}
respectively.
We can apply, e.g., a gradient descent or a Riemann-Newton method, see~\cite{AMS08}.
Both methods are described in the following for our setting and were implemented. 

We need the following notation.
Let 
$T_x{\mathcal M}$ denote the tangential space of ${\mathcal M}$ at $x \in {\mathcal M}$ and
$\langle\cdot,\cdot\rangle_x\colon 	T_x{\mathcal M}\times T_x\mathcal M \rightarrow \mathbb R$ 
the Riemannian metric with induced norm \(\lVert\cdot\rVert_x\).
Let \(\gamma_{x,\xi}(t)\), \(x\in\mathcal M\), \(\xi\in T_x\mathcal M\) be
the minimal geodesic starting from \(\gamma_{x,\xi}(0) = x\) with
\(\dot\gamma_{x,\xi} (0) = \xi\). 
Then the exponential map \(\exp_x\colon T_x\mathcal M \rightarrow \mathcal M\) is given by
\(\exp_x\xi = \gamma_{x,\xi}(1)\).  
The inverse exponential map denoted by
\(\log_x = \exp_x^{-1}\colon \mathcal M \to T_x\mathcal M\)
is locally well-defined. 
For the manifolds used in our numerical examples, namely
the sphere \(\mathbb S^n\), \(n\in\mathbb N\), the $\operatorname{SO}(3)$ and the manifold \(\mathcal P(r)\), \(r\in\mathbb N\), of symmetric positive definite \(r\times r\) matrices,
the specific maps are given in the Appendix~\ref{app:B}.
Finally, for~$F\colon \mathcal M\rightarrow \mathbb R$, let 
\[
\grad  F (x) \in T_x \mathcal M
\quad{\rm and} \quad
\mathrm{Hess}_{F} (x): T_x \mathcal M \rightarrow T_x \mathcal M
\] 
be the Riemannian gradient and the Hessian of~$F\colon \mathcal M\rightarrow \mathbb R$ 
at $x \in \mathcal M$, respectively.
For $d (\cdot,y): \mathcal M \rightarrow \mathbb R_{\ge 0}$ one has
\[
\grad d^2 (x,y) = - 2 \log_x y.
\]
Considering the image $u \in {\rm M}$,
we abbreviate the gradient and the Hessian of a function $F:{\rm M} \rightarrow \mathbb R$  
defined on the product manifold ${\rm M}$ also by $\grad F$ and $\mathrm{Hess}_{F}$, resp., 
since its use becomes clear from the context.
\paragraph{Gradient descent method.} 
The gradient descent method computes, starting with $\tilde u^{(0)} := u^{(k)}$, iteratively 
\begin{equation} \label{grad_desc}
\tilde u^{(r+1)} = \exp_{\tilde u^{(r)}} \Bigl(t_r \grad {\mathcal J}_{\nu,v^{(k+1)}}(u)|_{u=\tilde u^{(r)}}\Bigr), \quad \nu \in \{1,2 \},
\end{equation}
with appropriate step sizes  $t_r >0$.
The gradient $\grad {\mathcal J}_{\nu,v^{(k+1)}}$ is given by
\begin{align} \label{grad_aniso}
\left(\grad {\mathcal J}_{1,v^{(k+1)}} (u)\right)_{i} &=  -1_{\mathcal{V}(i)}\log_{u_{i}} f_i - 2\lambda 
\sum_{j \in {\mathcal N}(i)} v_{i,j}^{(k+1)}\log_{u_i} u_j,\\
\left(\grad {\mathcal J}_{2,v^{(k+1)}} (u)\right)_{i} &=  -1_{\mathcal{V}(i)}\log_{u_{i}} \! \!f_i \!- \!2\lambda\Bigl(\!  v_{i}^{(k+1)}\!  \!  \! \!
\sum_{j \in {\mathcal N}(i)^+} \!\! \log_{u_i} \! \! u_j \!- \!  \!  \!\! \!\sum_{j \in {\mathcal N}(i)^-}\!   \! v_{j}^{(k+1)} \log_{u_i} \! \! u_j\!  \Bigr)\label{grad_iso}
\end{align}
where $i \in {\mathcal G}$, 
${\mathcal N}(i)^- \coloneqq \bigl\{ (i_1-1,i_2),(i_1,i_2-1)\bigr\}$ 
and 
${\mathcal N}(i) \coloneqq {\mathcal N}(i)^+ \cup {\mathcal N}(i)^-$.
\paragraph{Riemann--Newton method.}
Alternatively we can use a Riemann--Newton method to compute a minimizer.
Finding a descent direction $\eta_r\in T_{\tilde u^{(r)}} {\rm M}$ with Newton's method 
is done for \(\nu\in\{1,2\}\) by solving the system of equations
\begin{align} \label{newton_eq}
\mathrm{Hess}_{\mathcal{J}_{\nu,v^{(k+1)}}}\bigl(\tilde u^{(r)}\bigr)(\eta_r) &= -\grad {\mathcal J}_{\nu,v^{(k+1)}}\bigl(\tilde u^{(r)}\bigr).
\end{align}
Then we update, starting with $\tilde u^{(0)} := u^{(k)}$, iteratively 
\[
\tilde u^{(r+1)} = 
\begin{cases}
	\exp_{\tilde u^{(r)}}\eta_r &\text{if } \bigl\langle\eta_r,\grad \mathcal J_{\nu,v^{(k+1)}}(\tilde u^{(r)})\bigr\rangle_{\tilde u^{(r)}} < 0,\\
 \exp_{\tilde u^{(r)}}\Bigl(-\grad \mathcal J_{\nu,v^{(k+1)}}\bigl(\tilde u^{(r)}\bigr)\Bigr)&\text{otherwise}.
\end{cases}
\]
The whole half-quadratic minimization method for our problem is given in Algorithm \ref{alg:half_quad}. 
\\

%-----------------------------------------------------------------
\begin{algorithm}[tbp]
	\caption[]{\label{alg:half_quad} Image Restoration by Half-Quadratic Minimization (multiplicative)}
	\begin{algorithmic}
		\STATE \textbf{Input:}  ${\mathcal V}$, corrupted image $f \in \mathcal{M}^{\# {\mathcal V}}\subseteq\mathrm{M}$, $\lambda$, $\varphi$
		\STATE \textbf{Output:} Restored image $u \in {\rm M}$ 
		\STATE \textbf{Initialize} $u^{(0)}$
		\REPEAT
		\STATE $k\leftarrow k+1$;
		%\FORALL{$i\in\G$}
				\STATE $v^{(k+1)} = s\bigl(d^{(k)}\bigr)$;
		%\ENDFOR
		\STATE Compute 
		\STATE $u^{(k+1)} \coloneqq \exp_{u^{(R)}}\eta_R$ 
		\STATE by $R$ steps of a gradient descent method \eqref{grad_aniso}, resp. \eqref{grad_iso} 
		\STATE or by Newton's approach \eqref{newton_eq};
		\UNTIL{stopping criterion is reached;}
		\end{algorithmic}
\end{algorithm}
%-----------------------------------------------------------------

In~\cite{NC07} it was shown that the multiplicative half-quadratic minimization is equivalent to the quasi-Newton descent method, 
and therefore we expect that its performance is better than the simple gradient descent method.
Let us comment on this for the manifold-valued setting. 
%--------------------------------------------------
\begin{remark} (Relation of  half-quadratic minimization to gradient descent 
and (quasi) Newton methods) \label{rem:methods}\\
We restrict our attention to the case $\nu =1$. Similar considerations can be done for $\nu = 2$.
The gradient of the initial functional $J_1$ in \eqref{functional_1} is given for $u_i \not = u_j$ by
	\begin{flalign*}
	\bigl(\grad J_1(u)\bigr)_i &= -1_{\mathcal{V}(i)} \log_{u_i}f_i-\lambda \sum_{j \in\mathcal{N}(i)}\varphi^\prime\bigl(d(u_i,u_j)\bigr)\frac{\log_{u_i}u_j}{\|\log_{u_i}u_j\|_{u_i}}\\
	&=-1_{\mathcal{V}(i)} \log_{u_i}f_i-2\lambda \sum_{j \in\mathcal{N}(i)}\frac{\varphi^\prime\bigl(d(u_i,u_j)\bigr)}{2d(u_i,u_j)}\log_{u_i}u_j
	\end{flalign*}
	which by \eqref{mini_1} can be rewritten as
	\begin{equation}
	\bigl(\grad J_1(u)\bigr)_i =-1_{\mathcal{V}(i)} \log_{u_i}f_i-2\lambda \sum_{j \in\mathcal{N}(i)}s\bigl(d(u_i,u_j)\bigr)\log_{u_i}u_j.\label{linear_grad}
	\end{equation}
Hence a gradient descent algorithm applied to the initial functional $J_1$ coincides with the half-quadratic method 
if we perform only one step in the gradient descent method \eqref{grad_desc} to obtain an update of $u$.
More than one gradient descent step in \eqref{grad_desc} leads to a linearized gradient descent of $J_1$. 
If we perform a Riemann--Newton step \eqref{newton_eq} to update $u$
we have a quasi-Newton method for $J_1$. 
Note that $v^{(k+1)} = s({\rm d}_{u^{(k)}})$ is fixed in the half-quadratic update step of $u$
which is not the case in \eqref{linear_grad}. This simplifies the computation of the Hessian in the half-quadratic approach.
\end{remark}

\newpage
%-------------------------------------------------------------------------------
\section{Convergence in Hadamard Manifolds} \label{sec:hada}
%-------------------------------------------------------------------------------
We start with a general remark.
%-------------------------------------------------------------------------------
\begin{remark} \label{rem_u}
Assume that $\varphi\colon \mathbb R \rightarrow \mathbb R_{\ge 0}$ fulfills the assumptions
of Proposition \ref{prop_1} iii).
Let $\bigl\{(u^{(k)},v^{(k)})\bigr\}_k$ be the sequence produced by Algorithm~\ref{alg:half_quad}.
Then we know that $v^{(k)} \in \bigl(0, \varphi''(0+)/2\bigr]^p$, where $p=2\#\mathcal{G}$, if $\nu=1$ (anisotropic case), and $p=\#\mathcal{G}$, if $\nu=2$ (isotropic case).
By construction we have for the iterates produced by Algorithm~\ref{alg:half_quad} that
\begin{equation} \label{abstieg}
{\mathcal J}_\nu \bigl(u^{(k)},v^{(k)}\bigl) \ge {\mathcal J}_\nu \bigl(u^{(k)},v^{(k+1)}\bigr) \ge {\mathcal J}_\nu \bigl(u^{(k+1)},v^{(k+1)}\bigr)
\end{equation}
so that the sequence $\bigl\{ {\mathcal J}_\nu \bigl(u^{(k)},v^{(k)}\bigr)\bigr\}_{k \in \mathbb N}$ is monotonically decreasing.
By \eqref{dual_2} and since $\varphi$ is nonnegative, the function
${\mathcal J}_\nu$ is bounded from below by zero and the sequence $\bigl\{ {\mathcal J}_\nu \bigl(u^{(k)},v^{(k)}\bigr) \bigr\}_k$ 
converges to some $b_\nu$. 
This holds also true for $J_\nu\bigl(u^{(k)}\bigr)$ by \eqref{rel_J_J}.
If ${\mathcal M}$ is compact as in the case of spheres or $\operatorname{SO}(3)$, then $\bigl\{u^{(k)}\bigr\}_k$ is clearly bounded.
If ${\mathcal M}$ is an Hadamard space as defined in the next subsection and $\varphi$ is coercive, then $\bigl\{u^{(k)}\bigr\}_k$
is bounded since $J_\nu$ is by Proposition \ref{prop_2} coercive. 
In these cases $\bigl\{\bigl(u^{(k)},v^{(k)}\bigr)\bigr\}_k$ is also bounded and
therefore there exists a subsequence $\bigl\{\bigr(u^{(k_j)},v^{(k_j)}\bigr)\bigr\}_j$ which converges to a point $(\bar u, \bar v) \in {\rm M} \times \bigl[0,\varphi''(0)/2\bigr]^p$.
\end{remark}
%-------------------------------------------------------------------

For Hadamard spaces many results on the convergence of algorithms carry directly over from the Hilbert space setting.
This is in particular true for the half-quadratic minimization algorithm.
In this section we summarize these results for convex functions $\varphi$ and data in Hadamard spaces.

We start by recalling some basic facts.
A curve $\gamma\colon [0,1] \rightarrow X$ in a metric space $(X,d)$ is called a geodesic if for all $t_1,t_2 \in [0,1]$
the relation
\[
	d \bigl(\gamma(t_1),\gamma(t_2) \bigr) = \rvert t_1 - t_2\lvert d \bigl(\gamma(0),\gamma(1) \bigr)
\]
holds true. A function $h: X \rightarrow \mathbb R$ is called convex if $h \circ \gamma$ is convex for each geodesic
$\gamma\colon[0,1] \rightarrow X$, i.e., if for all $t \in [0,1]$ we have
\[
h\bigl( \gamma(t) \bigr) \le t h\bigl( \gamma (0) \bigr) + (1-t)h \bigl(\gamma(1)\bigr)
\]
and strictly convex if we have a strict inequality for all $0 < t <1$. An Hadamard space is a complete metric space $({\mathcal H},d)$ with the property that any two points $x,y$ 
are connected by a geodesic and the following condition holds true
\begin{equation}\label{eq:reshet}
d(x,v)^2 + d(y,w)^2 \le d(x,w)^2 + d(y,v)^2 + 2 d(x,y)d(v,w),
\end{equation}
for any $x,y,v,w \in X.$ Inequality \eqref{eq:reshet} implies that Hadamard spaces have nonpositive curvature~\cite{Alex51,Re68} and Hadamard spaces are 
thus a natural generalization of complete simply connected Riemannian manifolds of nonpositive sectional curvature,
the so-called Hadamard manifolds. For more details, the reader is referred to~\cite{bacak14,Jost97}.
Unfortunately, the spheres and the rotation group are not Hadamard manifolds, while the symmetric positive definite matrices
have this nice property.
The following facts can be shown similarly as in $\mathbb R^d$, see \cite[Lemma 2.2.9]{bacak14},~\cite{sturm03}.

%----------------------------------------------------
\begin{lemma}\label{prop_3}
Let $({\mathcal H}, d)$ be an Hadamard space and $F\colon {\mathcal H} \rightarrow \mathbb R \cup \{+\infty\}$ 
be a convex lsc function which is coercive, i.e., satisfies $F(x) \rightarrow +\infty$ whenever
$d(x,x_0) \rightarrow +\infty$ for some $x_0 \in {\mathcal H}$. 
Then $F$ has a minimizer. If $F$ is convex, then any critical point is a  global minimizer.
If $F$ is coercive and strictly convex, then the minimizer is unique.
\end{lemma}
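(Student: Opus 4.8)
The plan is to run the direct method of the calculus of variations, transferred to the Hadamard setting. For existence I would fix $m \coloneqq \inf_{\mathcal H} F$ and pick a minimizing sequence $\{x_n\}$ with $F(x_n) \to m$; since $F$ is proper and coercive (we assume $F \not\equiv +\infty$, as otherwise there is nothing to prove), such a sequence stays bounded, for otherwise $d(x_n,x_0) \to +\infty$ along a subsequence would force $F(x_n) \to +\infty$. The issue is then to extract a limit that is again feasible, and this is the main obstacle: because Hadamard spaces need not be locally compact, I cannot simply invoke sequential compactness of closed bounded sets. I would circumvent this in one of two equivalent ways. The cleanest is to work with the sublevel sets $C_\alpha \coloneqq \{x : F(x) \le \alpha\}$: convexity of $F$ makes each $C_\alpha$ convex, lower semicontinuity makes it closed, and coercivity makes it bounded; choosing $\alpha_k \downarrow m$ yields a nested sequence of nonempty bounded closed convex sets. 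The standard Hadamard-space fact that such a nested family has nonempty intersection --- proved via the unique circumcenter and the Bruhat--Tits (CN) inequality, itself a consequence of \eqref{eq:reshet} --- furnishes a point $\bar x$ with $F(\bar x) \le \alpha_k$ for all $k$, hence $F(\bar x) = m$ (which in particular shows $m$ is finite, since $F$ does not take the value $-\infty$). Equivalently, one may use that bounded sequences in Hadamard spaces admit weakly convergent subsequences and that convex lsc functions are weakly lsc, exactly the ingredients recorded in \cite[Lemma 2.2.9]{bacak14} and \cite{sturm03}.

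For the claim that every critical point of the convex $F$ is a global minimizer, I would argue geodesically. Let $x$ be critical and $y \in {\mathcal H}$ arbitrary, and let $\gamma\colon[0,1]\to{\mathcal H}$ be the geodesic with $\gamma(0)=x$, $\gamma(1)=y$, which exists and is unique in an Hadamard space. The scalar function $g(t) \coloneqq F(\gamma(t))$ is convex by the very definition of convexity on ${\mathcal H}$, and criticality of $x$ means its right derivative satisfies $g'(0+) \ge 0$ (in the Riemannian case this reads $\langle \grad F(x), \dot\gamma(0)\rangle = 0$). A convex function of one real variable whose right derivative at the left endpoint is nonnegative is nondecreasing, so $g(1) \ge g(0)$, i.e.\ $F(y) \ge F(x)$. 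As $y$ was arbitrary, $x$ is a global minimizer.

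Finally, for uniqueness under strict convexity, I would suppose toward a contradiction that $x \neq y$ are two minimizers, both attaining the value $m$ guaranteed by the existence part. Let $z \coloneqq \gamma(\tfrac12)$ be the midpoint of the geodesic joining them. Strict convexity gives $F(z) < \tfrac12 F(x) + \tfrac12 F(y) = m$, contradicting the definition of $m$ as the infimum; hence the minimizer is unique. Here coercivity and convexity serve only to guarantee that a minimizer exists in the first place. I expect the only delicate point of the whole argument to be the existence step, and specifically the replacement of ordinary compactness by the nested-convex-sets / weak-compactness machinery peculiar to nonpositively curved spaces.
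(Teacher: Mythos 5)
Your argument is correct. Note, however, that the paper does not actually prove this lemma: it states it as a known fact, deferring to \cite[Lemma 2.2.9]{bacak14} and \cite{sturm03}. What you have written is essentially a reconstruction of the standard proof from those references --- bounded closed convex sublevel sets plus the nested-intersection (or weak-compactness) property of Hadamard spaces for existence, monotonicity of the convex one-variable function $t\mapsto F(\gamma(t))$ for the critical-point claim, and the midpoint argument for uniqueness --- so there is no substantive divergence to report; the only delicate step, replacing local compactness by the CAT(0) machinery, is exactly the one you flag and handle correctly.
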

%----------------------------------------------------

In an Hadamard space $({\mathcal H},d)$ we have that 
\begin{itemize}
\item[(D1)]  $d\colon {\mathcal H} \times {\mathcal H} \rightarrow \mathbb R_{\ge 0}$  and 
$d^2\colon {\mathcal H} \times {\mathcal H} \rightarrow \mathbb R_{\ge 0}$ are convex, and
\item[(D2)] $d^2(\cdot,y)\colon {\mathcal H} \rightarrow \mathbb R_{\ge 0}$ is strictly convex.
\end{itemize}
Then we obtain the following proposition whose simple proof is added 
for convenience in Appendix~\ref{app:proofs}.
%
%----------------------------------------------------
\begin{proposition}\label{prop_2} 
Let $({\mathcal M},d) = ({\mathcal H},d)$ be an Hadamard manifold.
\begin{itemize}
 \item[{\rm i)}] 
 Let $\varphi\colon \mathbb R_{\ge 0} \rightarrow \mathbb R_{\ge 0}$ and  
 in the case ${\mathcal V} \not = {\mathcal G}$, further assume that  $\varphi$ is coercive.
Then the functions ${J}_\nu$, $\nu = 1,2$, in \eqref{functional_1} and \eqref{functional_2}
are coercive so that they have a minimizer.
 \item[{\rm ii)}]
 If in addition $\varphi$ is increasing and convex,
then the functions $J_\nu$, $\nu = 1,2$,
are convex. If in addition ${\mathcal V} = {\mathcal G}$ or $\varphi$ is strictly convex,
then the functions $J_\nu$, $\nu = 1,2$, are strictly convex and have  unique minimizers.
 \end{itemize}
\end{proposition}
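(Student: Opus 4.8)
The plan is to work throughout on the product manifold $\mathrm M=\mathcal M^{n\times m}$, which is again a finite-dimensional Hadamard manifold: it is complete and simply connected with nonpositive curvature, its geodesics are precisely the componentwise ones $\gamma(t)=(\gamma_i(t))_{i\in\mathcal G}$, and with the product distance $d_{\mathrm M}^2(u,w)=\sum_{i\in\mathcal G}d^2(u_i,w_i)$ the properties (D1) and (D2) lift componentwise. I regard $J_\nu$ as a function on $\mathrm M$ and note that it is continuous, hence lsc, since $d$ is continuous and $\varphi$ is differentiable.

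For part i) I first show coercivity. If $\mathcal V=\mathcal G$, the data term $\tfrac12\sum_{i\in\mathcal G}d^2(u_i,f_i)$ already tends to $+\infty$ as $d_{\mathrm M}(u,f)\to\infty$, while the regularizer is nonnegative, so $J_\nu$ is coercive. If $\mathcal V\neq\mathcal G$, I argue by contradiction along a sequence on which $J_\nu$ stays bounded: boundedness of the data term keeps all $u_i$, $i\in\mathcal V$, bounded, and boundedness of the regularizer together with coercivity of $\varphi$ keeps every neighbor distance $d(u_i,u_j)$ bounded (for $J_2$ since the $\|\cdot\|_2$-argument dominates each entry). As the grid is connected and $\mathcal V\neq\emptyset$, every pixel is joined to a data pixel by a path of bounded edge lengths, so the triangle inequality bounds every $u_k$, contradicting $d_{\mathrm M}(u,u_0)\to\infty$. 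Existence then follows because a finite-dimensional Hadamard manifold, hence $\mathrm M$, is proper by Hopf--Rinow, so a coercive continuous function attains its minimum on a compact sublevel set.

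For part ii) I use two elementary composition facts on $\mathrm M$: (a) by (D1)/(D2) along componentwise geodesics, $u\mapsto d^2(u_i,y)$ and $u\mapsto d(u_i,u_j)$ are convex, the former strictly convex in the $i$th coordinate; and (b) if $g_1,\dots,g_k\ge0$ are convex, so is $\|(g_1,\dots,g_k)\|_2$, because the Euclidean norm is convex and nondecreasing on the nonnegative orthant. Since $\varphi$ is increasing and convex, $\varphi\circ(\text{convex})$ is convex, so every regularizing summand in \eqref{functional_1} and \eqref{functional_2} is convex, and adding the convex data term gives convexity of $J_\nu$. If $\mathcal V=\mathcal G$, the data term is strictly convex (each coordinate is pinned by a strictly convex $d^2(\cdot,f_i)$), a strictly convex plus convex function is strictly convex, and Lemma~\ref{prop_3} yields the unique minimizer.

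The remaining and hardest case is strict convexity when $\mathcal V\neq\mathcal G$ with $\varphi$ strictly convex, where the data term no longer controls the non-data pixels. Here I would show that along any nonconstant geodesic $\gamma$ the function $J_\nu\circ\gamma$ is affine on no subinterval. If it were affine on a subinterval $I$, then, being a sum of convex functions, every summand is affine on $I$: strict convexity of $d^2(\cdot,f_i)$ forces $\gamma_i$ constant on $I$ for all $i\in\mathcal V$, and strict convexity of $\varphi$ forces each inner term constant on $I$ ($d(\gamma_i,\gamma_j)$ for $J_1$, resp. $h_i=(\sum_j d^2(\gamma_i,\gamma_j))^{1/2}$ for $J_2$). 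For $J_1$ this already makes every edge distance constant; for $J_2$ I pass to $\sum_i h_i^2=\sum_{\text{edges}}d^2(\gamma_i,\gamma_j)$, which is then constant, so each convex edge term $d^2(\gamma_i,\gamma_j)$ is affine on $I$. Starting from a constant data pixel $x_0$, for each neighbor $j$ the term $d^2(x_0,\gamma_j)$ is affine, so strict convexity of $d^2(x_0,\cdot)$ in (D2) forces $\gamma_j$ constant; by connectedness all $\gamma_k$ are constant on $I$, and since $\gamma$ is a geodesic it is constant, a contradiction. Hence $J_\nu\circ\gamma$ is strictly convex, $J_\nu$ is strictly convex, and Lemma~\ref{prop_3} gives uniqueness. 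The main obstacle is exactly this propagation for the isotropic $J_2$: the grouping into the norms $h_i$ and the one-sided neighborhoods $\mathcal N(i)^+$ hide the individual edges, and it is the identity $\sum_i h_i^2=\sum_{\text{edges}}d^2$ that restores access to single edges so that (D2) can drive the propagation through the connected grid.
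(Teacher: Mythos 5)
Your argument is correct and follows essentially the same route as the paper: coercivity for $\mathcal V\neq\mathcal G$ via the path/triangle-inequality propagation from a data pixel, convexity of the regularizer by composing the increasing convex $\varphi$ with the convex edge distances (and, for $J_2$, with the convexity of the Euclidean norm of a vector of nonnegative convex functions, which is the paper's Cauchy--Schwarz computation in different clothing). The one place where you go genuinely beyond the paper is the strict convexity of $J_\nu$ when $\mathcal V\neq\mathcal G$ and $\varphi$ is strictly convex: the paper disposes of this by appealing to ``the strict convexity in \eqref{help_1}'', which literally only yields strictness of $\varphi\circ h$ along geodesics where $h$ takes different endpoint values, so an extra argument is needed to rule out a nonconstant geodesic along which every edge distance stays constant. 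Your propagation argument --- affine restriction forces each summand affine, strict convexity of $d^2(\cdot,f_{i_0})$ pins the data pixels, and then (D2) applied edge by edge (using $\sum_i h_i^2=\sum_{\mathrm{edges}}d^2$ to recover individual edges in the isotropic case) forces all coordinates constant --- closes exactly this gap. Likewise, your appeal to Hopf--Rinow properness for existence in part i) is the right justification, since Lemma~\ref{prop_3} as stated requires convexity, which part i) does not assume; the paper leaves this point implicit.
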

%----------------------------------------------------

Under the assumptions of Proposition \ref{prop_1} iii),
we have in our algorithm that $v^{(k)} >0$.
Then, we see similarly as in the proof of Proposition \ref{prop_2} that the functionals 
$J_{\nu,v^{(k)}}$, $\nu = 1,2$,
are coercive and strictly convex. Thus the minimizer $u^{(k)}$ exists and is unique.

%-----------------------------------------------------------------------
\begin{theorem} \label{convergence}
Let $({\mathcal M},d) = ({\mathcal H},d)$ be an Hadamard manifold.
Let $\varphi\colon \mathbb R \rightarrow \mathbb R_{\ge 0}$ be an even, 
continuously differentiable, convex function which fulfills
\begin{itemize}
\item[{\rm i)}] $\varphi(\sqrt{t})$, $t>0$ is concave,
\item[{\rm ii)}] $\lim_{t \rightarrow \infty} \frac{\varphi(t)}{t^2} \rightarrow 0$,
\item[{\rm iii)}] $\varphi''(0+) > 0$.
\end{itemize}
In the case ${\mathcal V} \not = {\mathcal G}$ we further assume that $\varphi$ is strictly convex.
Then the sequence $\bigl\{ u^{(k)} \bigr\}_{k \in \mathbb N}$ generated by Algorithm \ref{alg:half_quad} 
converges to the minimizer of $J_\nu$, $\nu = 1,2$.
\end{theorem}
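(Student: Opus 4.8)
The plan is to show that every limit point of the bounded sequence $\{u^{(k)}\}$ is a minimizer of $J_\nu$, and then to upgrade this to convergence of the whole sequence by uniqueness of the minimizer. First I would check that hypotheses (i)--(iii), together with evenness, $C^1$-regularity and convexity of $\varphi$, imply all assumptions of Proposition \ref{prop_1}. Since $\varphi$ is even and differentiable, $\varphi'(0)=0$, and convexity makes $\varphi'$ nondecreasing, so $\varphi'(t)\ge 0$ for $t\ge 0$; the hypothesis $\varphi''(0+)>0$ then forces $\varphi'(t)>0$ for every $t>0$, which is exactly the extra condition in Proposition \ref{prop_1} iii). In particular $\varphi$ is increasing, and the same monotonicity argument (integrating $\varphi'(t)\ge\varphi'(t_0)>0$ for $t\ge t_0>0$) shows that $\varphi$ is coercive. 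By Proposition \ref{prop_2} the functionals $J_\nu$ are thus convex and coercive, and strictly convex with a unique minimizer $u^\ast$ under the stated case distinction on $\mathcal V$. Applying the same reasoning to $J_{\nu,v^{(k)}}$ with $v^{(k)}>0$ gives that each inner step \eqref{step_1} has a unique solution, so the algorithm is well defined.

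Next I would invoke Remark \ref{rem_u}: the values $\mathcal J_\nu(u^{(k)},v^{(k)})$ and $J_\nu(u^{(k)})$ decrease monotonically to a common limit $b_\nu$, and $\{u^{(k)}\}$ is bounded. Since $\mathcal M$ is a complete finite-dimensional Riemannian manifold, Hopf--Rinow makes bounded sets relatively compact, so some subsequence satisfies $u^{(k_j)}\to\bar u$. Continuity of $J_\nu$ gives $J_\nu(\bar u)=b_\nu$. Writing $\bar v\coloneqq s(\mathrm d_{\bar u})$, continuity of $s$ (including at $0$, where $s(0)=\varphi''(0+)/2$) and of $u\mapsto\mathrm d_u$ yields $v^{(k_j+1)}=s(\mathrm d_{u^{(k_j)}})\to\bar v$, with each component of $\bar v$ lying in $(0,\varphi''(0+)/2]$.

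The core step is to identify $\bar u$ as a minimizer via Remark \ref{rem:critical}, i.e. to prove $\bar u=\hat u$, where $\hat u\coloneqq\argmin_u\mathcal J_\nu(u,\bar v)$ is the unique minimizer of the strictly convex $\mathcal J_\nu(\cdot,\bar v)$. Since $u^{(k_j+1)}$ minimizes $\mathcal J_\nu(\cdot,v^{(k_j+1)})$, testing against the fixed competitor $\hat u$ gives $\mathcal J_\nu(u^{(k_j+1)},v^{(k_j+1)})\le\mathcal J_\nu(\hat u,v^{(k_j+1)})$. Letting $j\to\infty$, the left side tends to $b_\nu$ (it is squeezed between $J_\nu(u^{(k_j)})$ and $J_\nu(u^{(k_j+1)})$), while the right side tends to $\mathcal J_\nu(\hat u,\bar v)$; here I use that $\psi=\varphi^c$ is an infimum of affine functions of $s$, hence concave and finite on $(0,\infty)$, hence continuous at the interior point $\bar v$. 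This gives $b_\nu\le\mathcal J_\nu(\hat u,\bar v)$. Conversely $\mathcal J_\nu(\hat u,\bar v)\le\mathcal J_\nu(\bar u,\bar v)=J_\nu(\bar u)=b_\nu$, because $\bar v=s(\mathrm d_{\bar u})$ is the optimal auxiliary variable for $\bar u$. Hence $\mathcal J_\nu(\hat u,\bar v)=\mathcal J_\nu(\bar u,\bar v)$, and strict convexity forces $\bar u=\hat u=\argmin_u\mathcal J_\nu(u,s(\mathrm d_{\bar u}))$. By Remark \ref{rem:critical} the point $\bar u$ minimizes $J_\nu$, so $\bar u=u^\ast$.

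Finally, since $\{u^{(k)}\}$ is bounded and every convergent subsequence has the unique minimizer $u^\ast$ as its limit, a standard argument closes the proof: any subsequence staying a fixed distance from $u^\ast$ would, by relative compactness, admit a further subsequence converging to a second limit point, contradicting what was just shown; hence $u^{(k)}\to u^\ast$. I expect the main obstacle to be the core step above, namely combining the monotone decrease of the functional values with the required continuity of $s$ and of the conjugate $\psi$, and with the strict convexity making the inner minimizer unique, so as to certify that the limit point is a genuine fixed point of the alternating scheme rather than merely a point of stalled decrease.
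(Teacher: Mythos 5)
Your proof is correct and follows the same overall skeleton as the paper's: monotone descent of $\mathcal J_\nu$ (Remark \ref{rem_u}), extraction of a convergent subsequence, identification of the limit point as a fixed point of the alternating scheme so that Remark \ref{rem:critical} applies, and finally the uniqueness-of-minimizer argument to upgrade subsequential to full convergence. The one place where you genuinely diverge is the identification step. The paper passes to the shifted subsequence $u^{(k_j+1)} = T\bigl(u^{(k_j)}\bigr)$ and appeals to continuity of the solution map $T(u) = \argmin_z \mathcal J_\nu\bigl(z, s(\mathrm d_u)\bigr)$, which it asserts without proof; it then sandwiches $\mathcal J_\nu(\tilde u,\tilde v) \le \mathcal J_\nu(\bar u,\tilde v) \le \mathcal J_\nu(\bar u,\bar v)$ and uses uniqueness of both partial minimizers. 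You instead test the iterate against the \emph{fixed} competitor $\hat u = \argmin_u \mathcal J_\nu(u,\bar v)$ and pass to the limit in the inequality $\mathcal J_\nu\bigl(u^{(k_j+1)},v^{(k_j+1)}\bigr) \le \mathcal J_\nu\bigl(\hat u, v^{(k_j+1)}\bigr)$, which requires only continuity of $v \mapsto \mathcal J_\nu(\hat u, v)$, i.e.\ continuity of the concave function $\psi$ on $(0,\infty)$ where all components of $\bar v \in \bigl(0,\varphi''(0+)/2\bigr]^p$ live. This buys you a more self-contained argument: continuity of the argmin map is exactly the kind of claim that would itself be proved by a comparison of the sort you carry out, so your route closes a small gap the paper leaves open, at the modest cost of the (easy) observation that $\psi$ is finite and continuous on the open half-line. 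Your squeeze $J_\nu\bigl(u^{(k_j+1)}\bigr) \le \mathcal J_\nu\bigl(u^{(k_j+1)},v^{(k_j+1)}\bigr) \le J_\nu\bigl(u^{(k_j)}\bigr)$ for the left-hand side and the verification that the theorem's hypotheses subsume those of Propositions \ref{prop_1} and \ref{prop_2} (the content of Remark \ref{assumptions}) are both accurate.
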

%-----------------------------------------------------------------------

The proof which follows standard arguments is given in the appendix~\ref{app:proofs}.
Note that the assumptions of the theorem are fulfilled for the first two functions in Table \ref{possible_phi}.

%-----------------------------------------------------------------------
\begin{remark} \label{assumptions}
The assumptions in the theorem include those in Proposition \ref{prop_1}.
Note that the convexity of $\varphi$ and $\varphi''(0+) > 0$ imply that $\varphi'(t) > 0$ for all $t>0$.
Additionally the continuity of $\varphi'$ is required to make the function $s$ in \eqref{mini_1}
continuous and the (strict) convexity of $\varphi$ to make the objective function strictly convex.
Since $\varphi$ is convex, its derivative is increasing.
Together with $\varphi''(0+) > 0$ this implies that $\varphi'(t) > 0$ for all $t>0$
so that $\varphi$ is increasing for $t>0$ and coercive.
\end{remark}

%-------------------------------------------------------------------------------
\section{Numerical Examples} \label{sec:num}
%-------------------------------------------------------------------------------
%
In this section we demonstrate the performance of Algorithm \ref{alg:half_quad} 
for the functions $\varphi$ from Table \ref{possible_phi}. 
These functions are known for their edge-preserving properties.
Note that $\varphi_1$ was used in the ``lagged diffusivity fixed point iteration'' \cite{VO96}
for real-valued images and in the iteratively re-weighted least squares method \cite{GS14}
for $\mathbb S^2$-valued and ${\cal P}(3)$-valued images.
The function $\varphi_2$ is a Moreau envelope of the absolute value function, also known as the Huber function. 
Both functions are convex.
The non-convex function $\varphi_3$ was used for edge-preserving restoration of real-valued images in \cite{NC07,CBAB97}.
\begin{table}
	\setlength{\tabcolsep}{1em}
	\centering
	\begin{tabular}{lll} 
		\toprule
		&$\varphi(t)$ & $s(t)$  \\
		\midrule
		$\varphi_1(t)$ & $\sqrt{t^2+\varepsilon^2}$ & $\frac{1}{\sqrt{t^2+\varepsilon^2}}$ \\\addlinespace[1.5ex]
		$\varphi_2(t)$ & $\begin{cases}\frac{1}{2}t^2\quad &t<\varepsilon,\\
		\varepsilon\lvert t\rvert-\frac{1}{2}\varepsilon^2\quad& t\le\varepsilon \end{cases}$ & $\begin{cases}1\quad &t<\varepsilon,\\
		\frac{\varepsilon}{\lvert t\rvert}\quad& t\le\varepsilon \end{cases}$\\\addlinespace[1.5ex]
		$\varphi_3(t)$ & $1-\exp(-\varepsilon^2 t^2)$ & $\varepsilon^2 \exp(-\varepsilon^2 t^2)$\\
		\bottomrule
	\end{tabular}
	\caption{Functions $\varphi$ fulfilling the assumptions of Proposition \ref{prop_1}.
			}\label{possible_phi}
\end{table}
%

%In the following we have applied the gradient in order to find a decent direction in Algorithm \ref{alg:half_quad} for two reasons:
%i) The algorithm can be performed having only the exponential and logarithmic map of the manifold at hand and is implemented accordingly;
%ii) Newton's method appears to be slower than a gradient descent method for those values of $\varepsilon$ which are interesting
%in our restoration methods. This is illustrated in Fig.~\ref{time}, where we plotted the times for different $\varepsilon$ for both methods using $\varphi_1$
%for the noisy $\mathbb S^1$-valued image of size $256 \times 256$ in \cite[Fig. 4]{BLSW14}. 
%
%\begin{figure}[tbp]\centering
%	\begin{subfigure}[t]{.64\textwidth}\centering
%		\includegraphics{time}
%	\caption{Runing time for different \(\varepsilon\).}\label{subfig:time:time}
%	\end{subfigure}
%	\begin{subfigure}[t]{.32\textwidth}\centering
%	\includegraphics{}
%	\caption{A noisy test image.}\label{subfig:time:ex}
%	\end{subfigure}
%	\caption{\subref{subfig:time:time} 
%	Convergence times for half-quadratic minimization using Newton's method or the gradient for the descent direction,
%	with $\varphi_1$ and $\lambda=0.5$ on \subref{subfig:time:ex} an $\Ss^1$-valued image of size $256\times256$.}\label{time}
%\end{figure}

Unless stated otherwise we use the anisotropic approach and Newton's method in our implementations.
Although neither the spheres nor the rotation group are Hadamard
manifolds we have observed convergence in all our numerical examples.
This may be due to the fact that neighboring image pixels have values which are close enough on the manifold.

The algorithms where implemented in \textsc{MatLab} Version $14$b. 
The computations were performed on a Dell with $8$ GB of RAM and an Intel Core i7, 2.93 GHz, on Ubuntu 14.04 LTS.
%---------------------------------------------------------
\subsection{$\mathbb S^1$-valued data}
We start with the one-dimensional signal in Fig.~\ref{signal} to show how the different functions $\varphi$ 
from Table \ref{possible_phi} perform and how the parameter $\varepsilon$ influences the results. 
The original signal in Fig.~\ref{signal1} was obtained from $f(x) = 8\pi x^2$ by sampling with size $0.01$
and unwrapping modulo $2\pi$ such that the data are represented in $[-\pi, \pi)$.
Then wrapped Gaussian noise with standard deviation $\sigma = 0.3$ was added.
Using $\varphi_1$ to restore the signal gives relatively larges error 
in Fig.~\ref{signal2}. The Huber function $\varphi_2$ and the exponential function $\varphi_3$ 
show better results in Fig.~\ref{signal3} and \ref{signal4}.
The regularization parameter $\lambda$ was adapted to get the best error
\begin{equation}\label{err}
\mathrm{err} \coloneqq \frac{1}{N}\sum_{i=0}^{N-1} d\left(f_i,u_i\right),
\end{equation}
where $N = 101$.
Making $\varepsilon$ in the Huber function larger leads to the smoother result in Fig.\ref{signal5}
which approximates the original signal only well at the beginning of the signal. 
In Fig.~\ref{signal6} we choose a larger $\varepsilon$ in the function $\varphi_3$ with the
effect 
that edges of smaller height are smoothed
and we have a staircasing effect for nearly equally high ascents.
\begin{figure}[tbp]
	\centering
	\begin{subfigure}{0.32\textwidth}
		\includegraphics{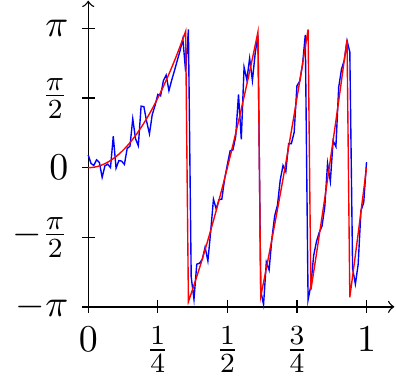}
		\caption{Original and noisy signal}\label{signal1}
	\end{subfigure}
	\begin{subfigure}{0.32\textwidth}
		\includegraphics{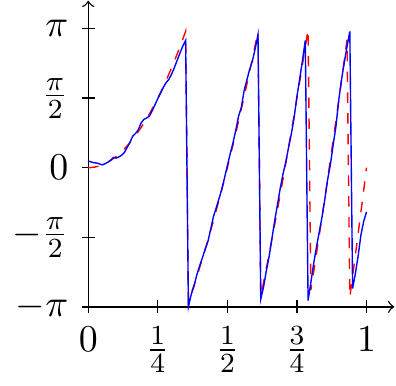}
		\caption{$\varphi_1$}\label{signal2}
	\end{subfigure}
	\begin{subfigure}{0.32\textwidth}
		\includegraphics{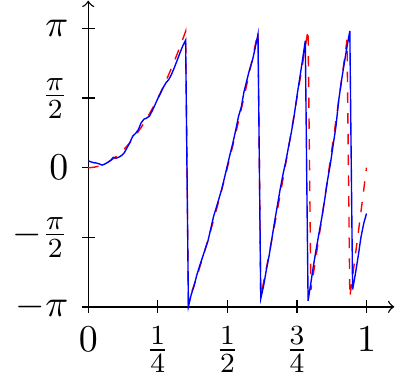}
		\caption{$\varphi_2$}\label{signal3}
	\end{subfigure}
	\begin{subfigure}{0.32\textwidth}
		\includegraphics{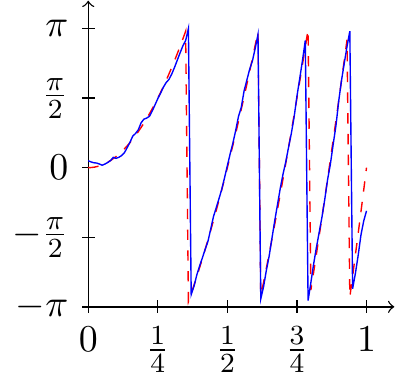}
		\caption{$\varphi_3$}\label{signal4}
	\end{subfigure}
	\begin{subfigure}{0.32\textwidth}
		\includegraphics{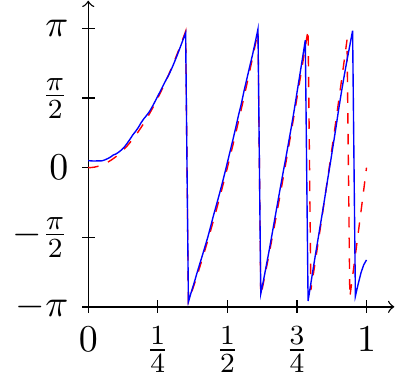}
		\caption{$\varphi_2$, too smooth}\label{signal5}
	\end{subfigure}	
	\begin{subfigure}{0.32\textwidth}
		\includegraphics{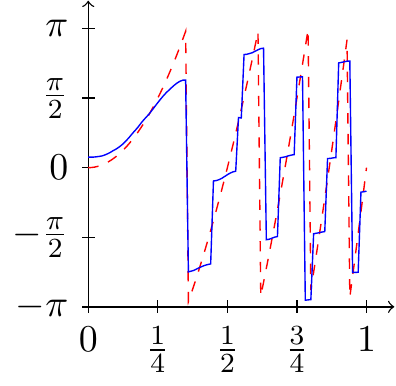}
		\caption{$\varphi_3$, staircasing}\label{signal6}
	\end{subfigure}
	\caption{
	Restoration of a noisy cyclic signal by half-quadratic minimization
	with various functions $\varphi$.
	\subref{signal1}  original (red) and noisy (blue) signals. 
	 Restored signal (blue) using 
	 \subref{signal2} $\varphi_1,\ \varepsilon = 6\times10^{-1},\ \lambda=3.4$, $\mathrm{err}=0.1007$, 
	 \subref{signal3} $\varphi_2,\ \varepsilon = 5\times10^{-1},\ \lambda=5.2$, $\mathrm{err}=0.1007$, 
	 \subref{signal4} $\varphi_3,\ \varepsilon =\frac{1}{\sqrt{2}},\ \lambda=10$,  $\mathrm{err}=0.1001$,
	 \subref{signal5} $\varphi_2,\ \varepsilon = 1,\ \lambda=20$, $\mathrm{err}=0.1733$, 	
	 \subref{signal6} $\varphi_3,\ \varepsilon = \sqrt{5},\ \lambda=10$,  $\mathrm{err}=0.3756$.}\label{signal}
\end{figure}
\\

Next we want to demonstrate the difference between the anisotropic \eqref{efunctional_1} and isotropic \eqref{efunctional_2} half-quadratic minimization methods. 
To this end, the function $\operatorname{atan2}(x,y)$ was sampled over a regular grid $\left[-\frac12,\frac12\right]^2$ with grid size $\frac{1}{128}$, 
resulting in Fig.~\ref{subfig:circular:orig}. 
Then we corrupt the image by removing a circular region from the center as shown in 
Fig.~\ref{subfig:circular:mask}.
Using the anisotropic functional leads to Fig.~\ref{subfig:circular:aniso}, 
where we observe artifacts in vertical and horizontal directions.
The image produced by applying the isotropic functional in Fig.~\ref{subfig:circular:iso}
does not have this problem.
This effect is also illustrated by the error plots in Figs. \ref{subfig:circular:err_aniso} and \ref{subfig:circular:err_iso}. 
%--------------------------------------------------------
\begin{figure}[tbp]\centering
\begin{subfigure}{0.31\textwidth}\centering
	\includegraphics{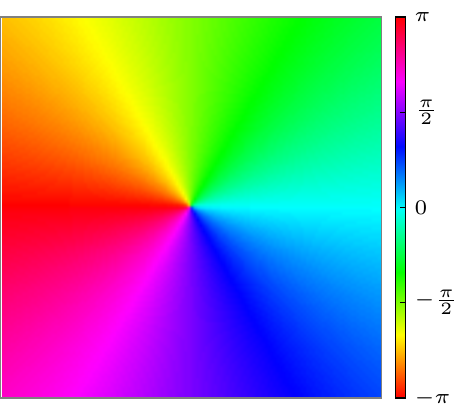}
	\caption{Original image.}\label{subfig:circular:orig}
\end{subfigure}
\begin{subfigure}{0.31\textwidth}\centering
	\includegraphics{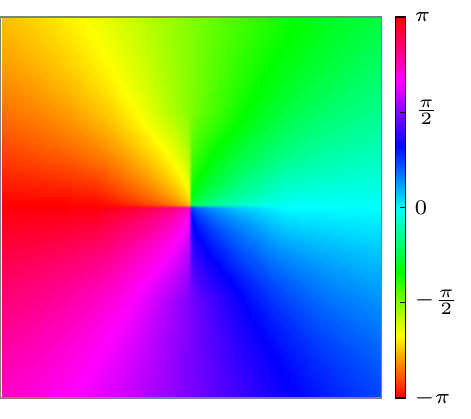}
	\caption{Anisotropic model.}\label{subfig:circular:aniso}
\end{subfigure}
\begin{subfigure}{0.31\textwidth}\centering
	\includegraphics{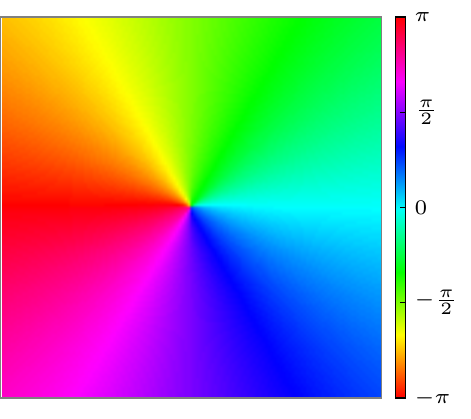}
	\caption{Isotropic model.}\label{subfig:circular:iso}
\end{subfigure}
\begin{subfigure}{0.31\textwidth}\centering
	\includegraphics{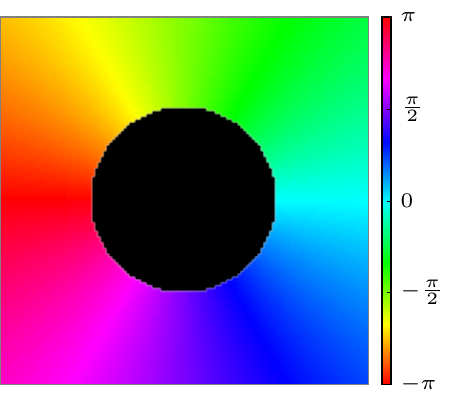}
	\caption{Corrupted image.}\label{subfig:circular:mask}
\end{subfigure}
\begin{subfigure}{0.31\textwidth}\centering
	\includegraphics{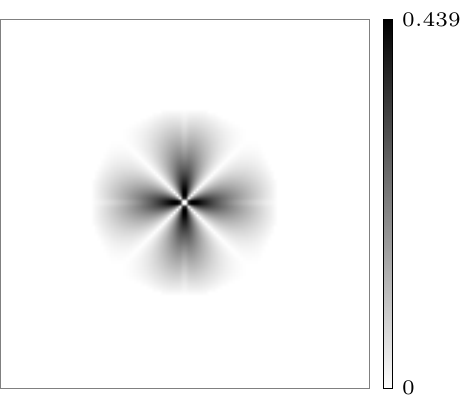}
	\caption{Error of \subref{subfig:circular:aniso}.}\label{subfig:circular:err_aniso}
\end{subfigure}
\begin{subfigure}{0.31\textwidth}\centering
		\includegraphics{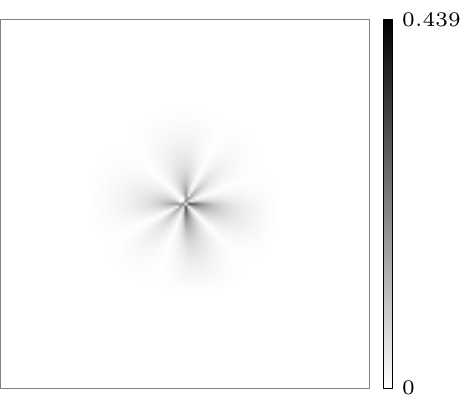}
	\caption{Error of \subref{subfig:circular:iso}.}\label{subfig:circular:err_iso}
\end{subfigure}
\caption{Inpainting of an image with cyclic data using the anisotropic and the isotropic model.
\subref{subfig:circular:orig} original image, 
\subref{subfig:circular:mask} corrupted image.
Restoration with~\subref{subfig:circular:aniso} the anisotropic model and \subref{subfig:circular:iso} the isotropic model
using the function~$\varphi_1$ with $\lambda=0.001$ and $\varepsilon= 10^{-2}$. 
\subref{subfig:circular:err_aniso} and~\subref{subfig:circular:err_iso} error images.}
\label{circular_inpainting}
\end{figure}
%
%----------------------------------------------------------------------------
\subsection{$\mathbb S^2$-valued data}
%----------------------------------------------------------------------------
%
In our first example we denoise color images in the chromaticity and brightness space.
For an RGB image the brightness is given by the real positive numbers $b \coloneqq \bigl(R^2 + G^2 + B^2\bigr)^\frac12$ 
and the chromaticity by the $\Ss^2$-values 
$c \coloneqq (R,G,B)/b$. We want to mention that the first two examples consider problems which have values on the positive octant; only in the last example we look at data covering the whole sphere. We compare half-quadratic minimization with the
different functions $\varphi$ and the TV approach from~\cite{WDS14} in Fig.~\ref{color}. 
We took the image ``Peppers''\footnote{Taken from the USC-SIPI Image
Database, available online at \url{http://sipi.usc.edu/database/database. php?volume=misc&image=15}} 
in Fig.~\ref{subfig:CB:orig} 
and added Gaussian noise with standard deviation $\sigma=0.1$ to all three color channels in the RGB model. 
For denoising in the chromaticity-brightness model we optimized \(\lambda\) with respect to best PSNR for both channels separately 
using a grid search on $\frac{1}{100}\NN$ for \(\varphi_1\) and \(\varphi_3\), and for $\varphi_2$ on $\NN$. 
Furthermore, we optimized $\varepsilon$ first in order of magnitude \(k=10^j\) and refined the search on \(\frac{k}{10}\NN\). 
Both channels are restored using the same function. 
The square-root functional $\varphi_1$ shows smoother transitions at edges while
the Huber function $\varphi_2$ tends more to staircasing.
Using the exponential function $\varphi_3$ yields a worse PSNR and hence does not compete with the previous two functions;
the edges are preserved but within the more constant regions some noise is left. 
The bright spot (see upper magnification) appears also too smooth.
This originates from the too smooth transitions in the brightness 
which are not detected as edges. The TV regularization 
introduces staircasing and it is not able to reduce the noise in the dark area (lower magnification). 
%
%--------------------------------------------------------------------
\begin{figure}[tbp]
	\centering
	\begin{subfigure}{0.32\textwidth}\centering
		\includegraphics{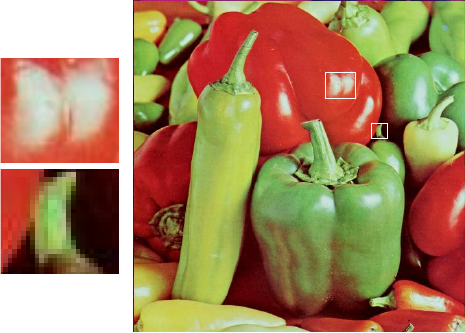}
		\caption{Original image.}\label{subfig:CB:orig}
	\end{subfigure}
	\begin{subfigure}{0.32\textwidth}
		\includegraphics{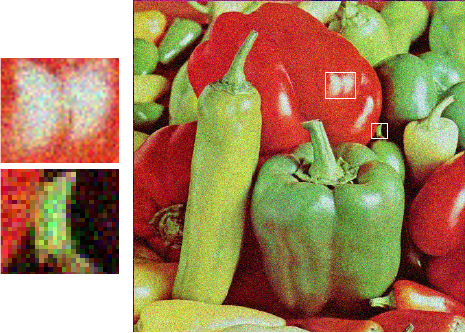}
		\caption{Noisy, PSNR: $20.31$.}\label{subfig:CB:noisy}
	\end{subfigure}
	\begin{subfigure}{0.32\textwidth}\centering
		\includegraphics{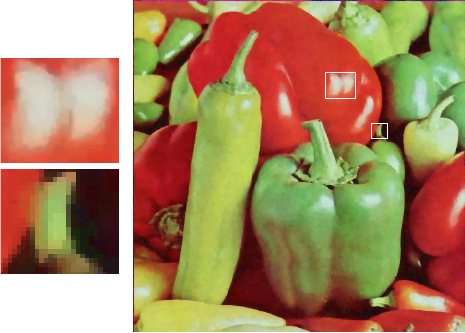}
		\caption{$\varphi_1$, PSNR: $30.29$.}\label{subfig:CB:phi1}
	\end{subfigure}
	\begin{subfigure}{0.32\textwidth}\centering
		\includegraphics{CB-Peppers-phi1}
		\caption{$\varphi_2$, PSNR: $29.68$.}\label{subfig:CB:phi2}
	\end{subfigure}
	\begin{subfigure}{0.32\textwidth}\centering
		\includegraphics{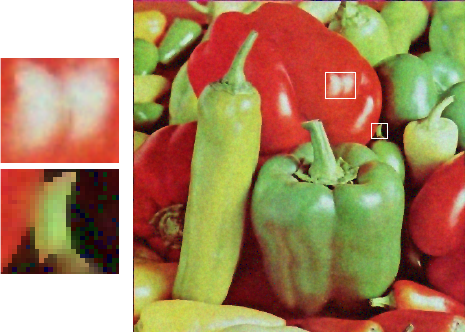}
		\caption{$\varphi_3$, PSNR: $28.95$.}\label{subfig:CB:phi3}
	\end{subfigure}
	\begin{subfigure}{0.32\textwidth}\centering
		\includegraphics{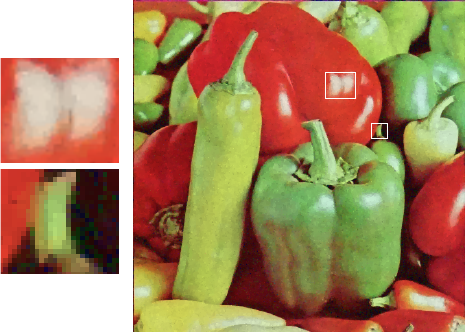}
		\caption{TV, PSNR: $29.32$.}\label{subfig:CB:TV}
	\end{subfigure}
	\caption{Denoising in the chromaticity-brightness space.
	\subref{subfig:CB:orig} Original image ``Peppers''.
	\subref{subfig:CB:noisy} Corrupted image by Gaussian noise on RGB, $\sigma=0.1$. 
	Restored images by 
	\subref{subfig:CB:phi1} 
	$\varphi_1$ using $\varepsilon_c = 10^{-3} ,\ \varepsilon_b = 10^{-2},\ \lambda_c = 0.44,\ \lambda_b=0.08$, 
	\subref{subfig:CB:phi2} 
	$\varphi_2$ using $\varepsilon_c = 10^{-3},\ \varepsilon_b =10^{-3} ,\ \lambda_c = 15,\ \lambda_b=10$, 
	\subref{subfig:CB:phi3} 
	$\varphi_3$ using $\varepsilon_c = 2\sqrt{3},\ \varepsilon_b = \sqrt{23},\ \lambda_c = 0.1,\ \lambda_b=0.03$, and 
	\subref{subfig:CB:TV} 
	TV method in \cite{WDS14}, $\alpha = 0.05$.}\label{color}
\end{figure}

In the second example we use  half-quadratic minimization for colorization in the chromaticity-brightness space. 
We assume that the brightness of the image is known, but 99 percent of the chromaticity information is lost. 
The original image is shown in Fig.~\ref{house_orig} and its corrupted version in Fig.~\ref{house_lost}.
For inpainting the chromaticity we have used a nearest neighbor initialization. 
With the regularizing function $\varphi_1$ we obtain the result depicted in Fig.~\ref{house_hq}. 
We compare this with Fig.~\ref{house_kang} which is obtained by using the chromaticity colorization method in \cite{QKL10}
which we have implemented for comparison.
%
%------------------------------------------------------------------
\begin{figure}[tbp]
	\centering
	\begin{subfigure}[t]{0.24\textwidth}
		\includegraphics[width = \textwidth]{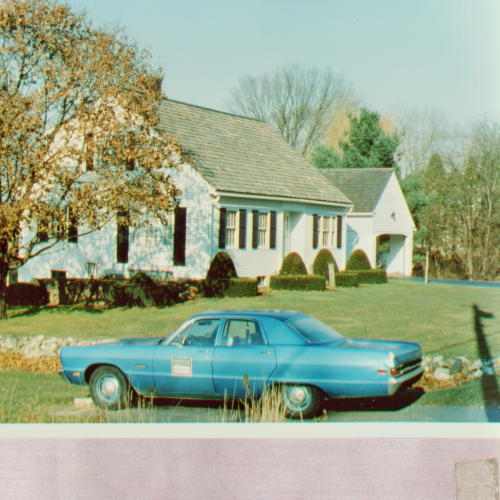}
		\caption{Original image house.}\label{house_orig}
	\end{subfigure}
	\begin{subfigure}[t]{0.24\textwidth}
		\includegraphics[width = \textwidth]{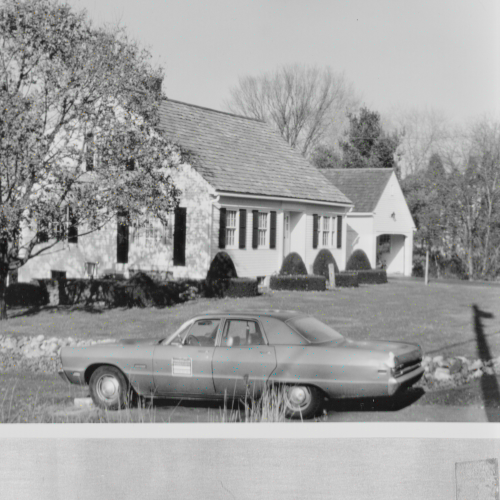}
		\caption{$99\%$ color lost.}\label{house_lost}
	\end{subfigure}
	\begin{subfigure}[t]{0.24\textwidth}
		\includegraphics[width = \textwidth]{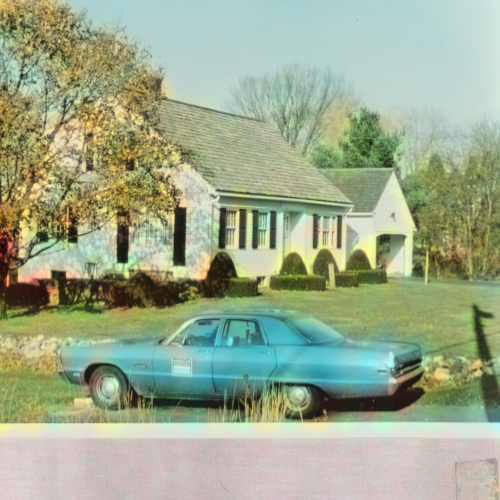}
		\caption{$\varphi_1$, PSNR: $27.19$.}\label{house_hq}
	\end{subfigure}
	\begin{subfigure}[t]{0.24\textwidth}
		\includegraphics[width = \textwidth]{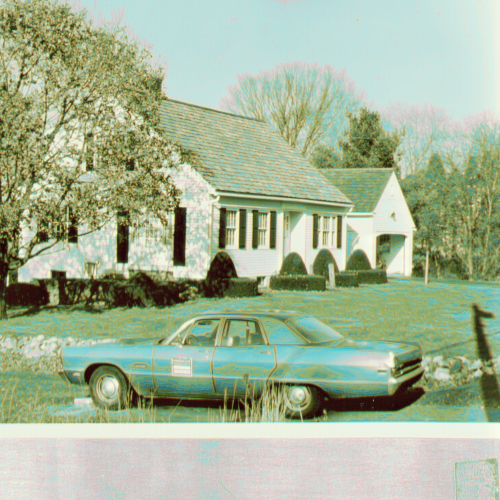}
		\caption{\cite{QKL10}, PSNR: $22.49$.}\label{house_kang}
	\end{subfigure}
	\caption{Image colorization.
	\subref{house_orig} Original image.
	\subref{house_lost} Corrupted image where $99\%$ of the color information (chromaticity) is lost. 
	Colorization using 
	\subref{house_hq} inpainting of the chromaticity with $\varphi_1,\ \lambda = 1,\ \varepsilon = 10^{-1}$. 
	\subref{house_kang}  the  method in \cite{QKL10} with parameters  $r=1,p=1,\sigma_1=2,\sigma_2=\infty,\gamma=0$.}
\end{figure}
%------------------------------------------------------------------

Our final experiment shows the smoothing of 3D directions in the synthetic image in Fig.~\ref{s2_field_noisy}. 
We use half-quadratic minimization with $\varphi_2$ to obtain Fig.~\ref{s2_field_res}. 
The original pattern is again visible.
%
%--------------------------------------------------------------------------------
\begin{figure}[tbp]
	\centering
	\begin{subfigure}{0.32\textwidth}
		\includegraphics[width = \textwidth]{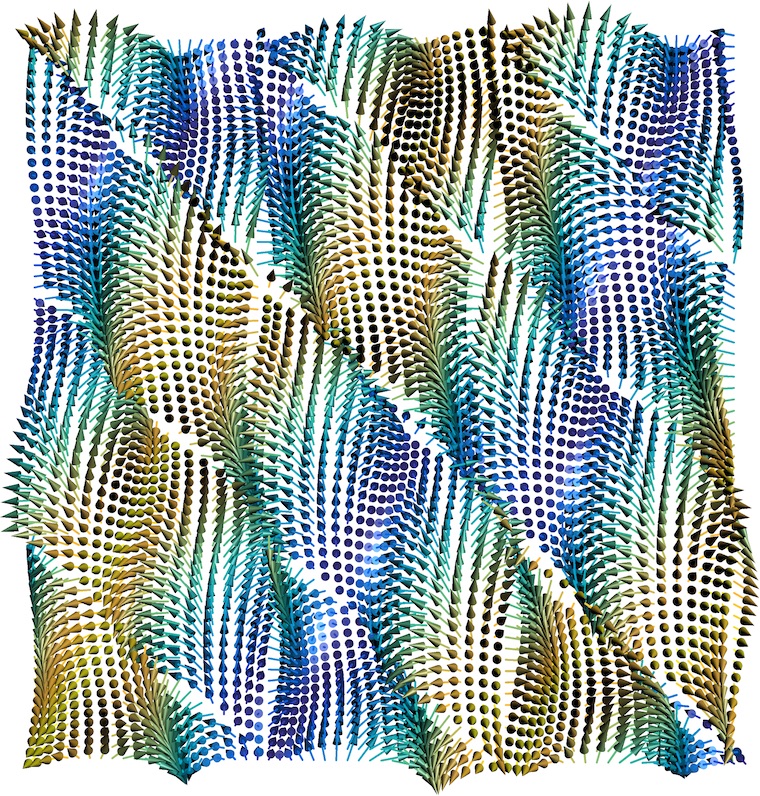}
		\caption{Original $\Ss^2$-field.}\label{s2_field_orig}
	\end{subfigure}
	\begin{subfigure}{0.32\textwidth}
		\includegraphics[width = \textwidth]{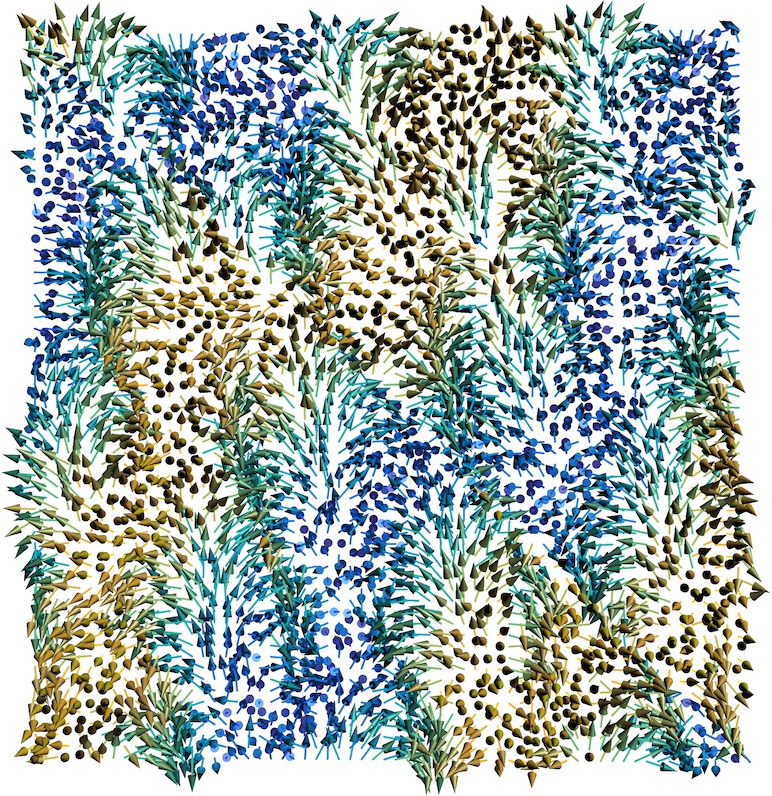}
		\caption{Noisy $\Ss^2$-field.}\label{s2_field_noisy}
	\end{subfigure}
	\begin{subfigure}{0.32\textwidth}
		\includegraphics[width = \textwidth]{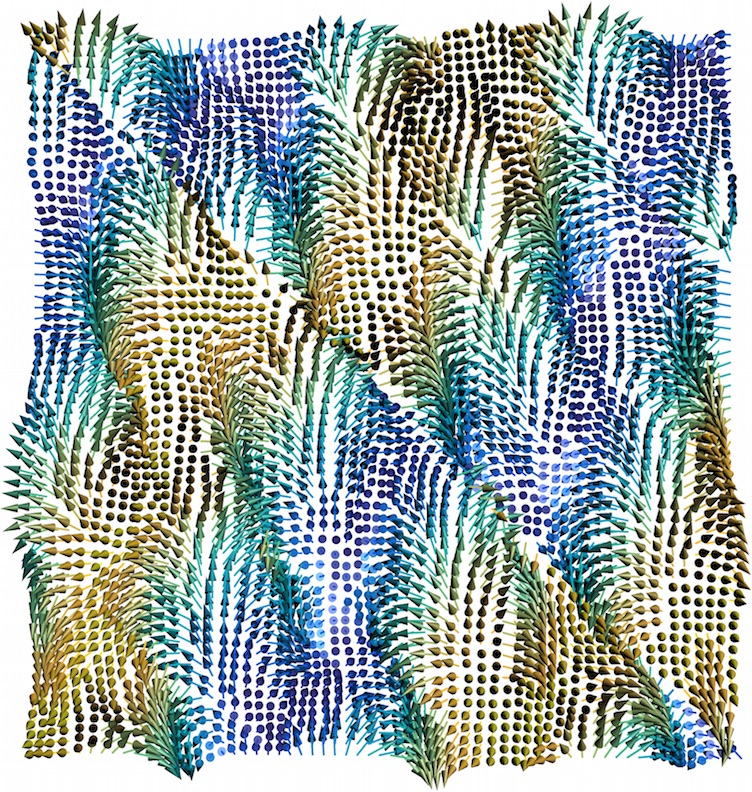}
		\caption{$\varphi_2$.}\label{s2_field_res}
	\end{subfigure}
	\caption{\subref{s2_field_orig} Original $\Ss^2$-field of size \(64\times 64\).
	\subref{s2_field_noisy} Corrupted field by Gaussian noise, $\sigma = 0.1$. 
	\subref{s2_field_res} Restored field with $\varphi_2,\ \lambda=2.6,\ \varepsilon =10^{-1}$, 
	leaving an error $\mathrm{err} = 0.1705$.}
\end{figure}

%
%-------------------------------------------------------------------------------
\subsection{${\mathcal P}(3)$-valued data}
\begin{figure}[tbp]
\centering
\begin{subfigure}[t]{0.32\textwidth}
	\centering
	\includegraphics[width=.8\textwidth]{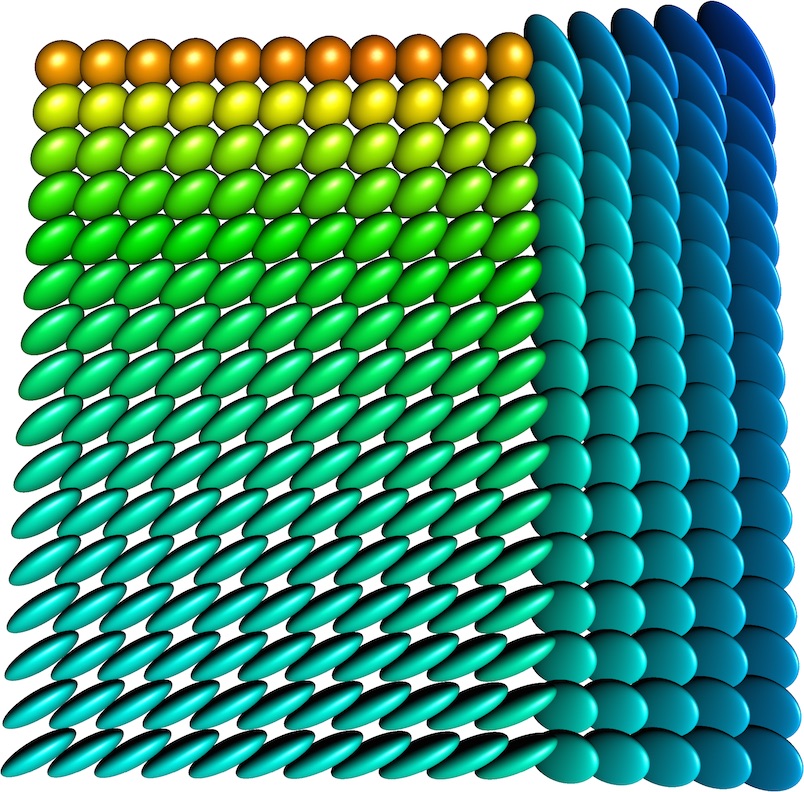}
	\caption{Original image.}\label{subfig:spd:inp:orig}
\end{subfigure}
\begin{subfigure}[t]{0.32\textwidth}
	\centering
	\includegraphics[width=.8\textwidth]{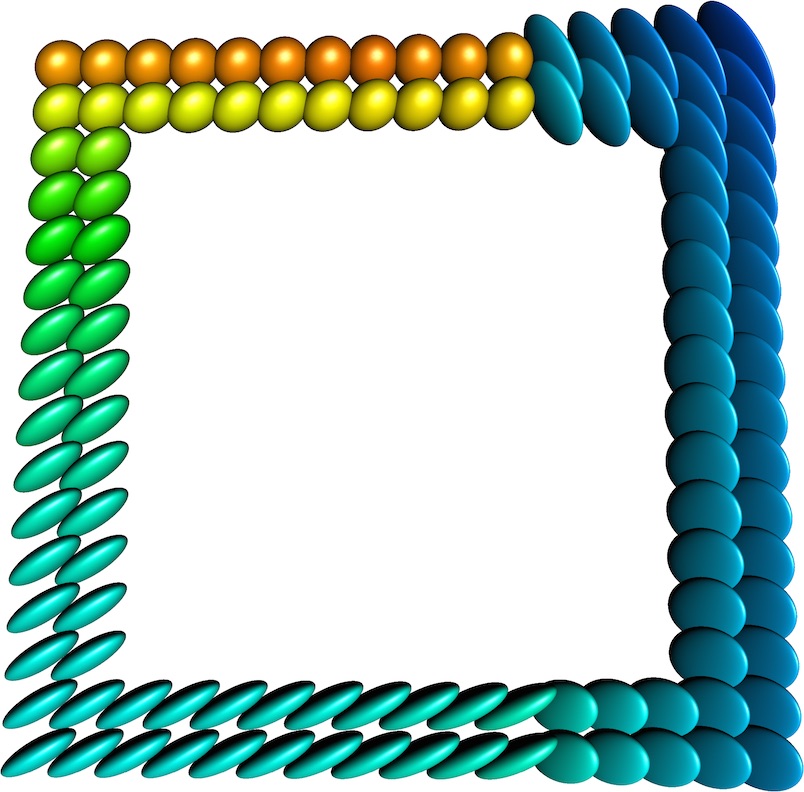}
	\caption{Corrupted image.}\label{subfig:spd:inp:lost}
\end{subfigure}\\
\begin{subfigure}[t]{0.32\textwidth}
	\centering
	\includegraphics[width=.8\textwidth]{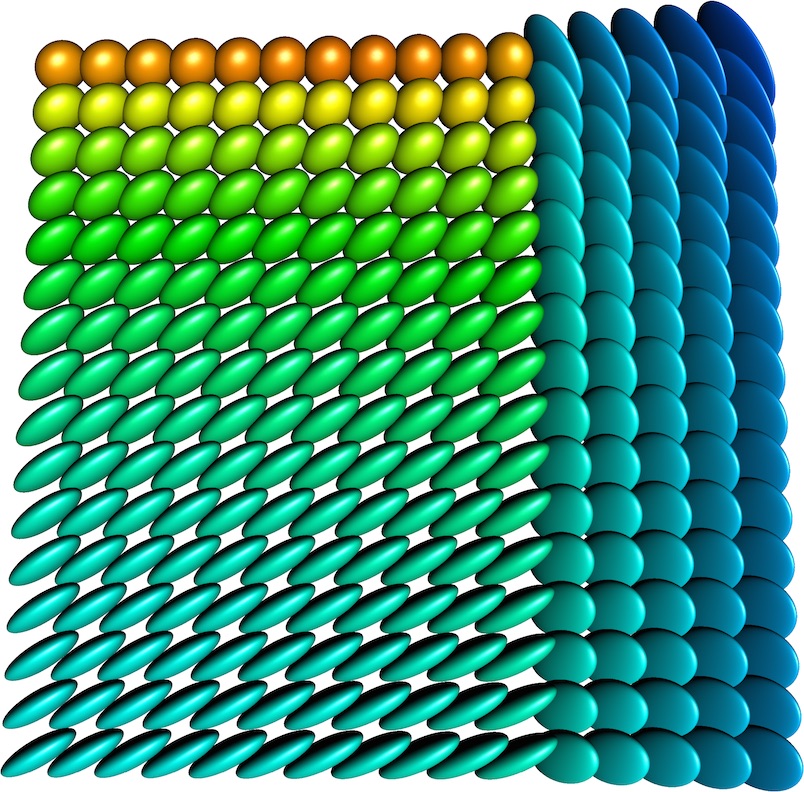}
	\caption{$\varphi_1,\ \varepsilon = 10^{-3}$.}\label{subfig:spd:inp:hqbest}
\end{subfigure}
\begin{subfigure}[t]{0.32\textwidth}
	\centering
	\includegraphics[width=.8\textwidth]{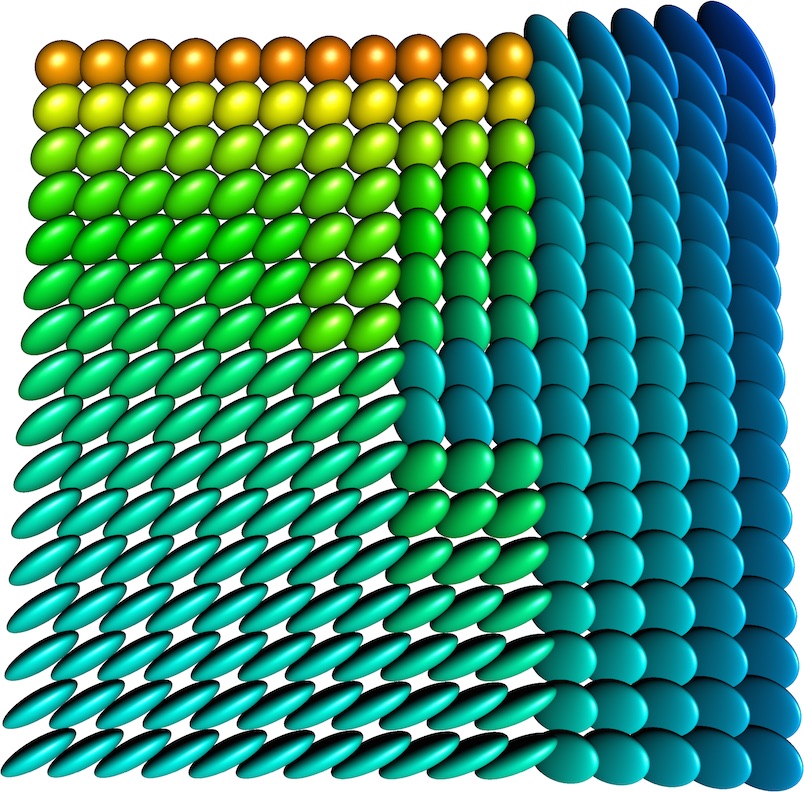}
	\caption{$\varphi_1,\ \varepsilon = 10^{-6}$.}\label{subfig:spd:inp:hq}
\end{subfigure}
\begin{subfigure}[t]{0.32\textwidth}
	\centering
	\includegraphics[width=.8\textwidth]{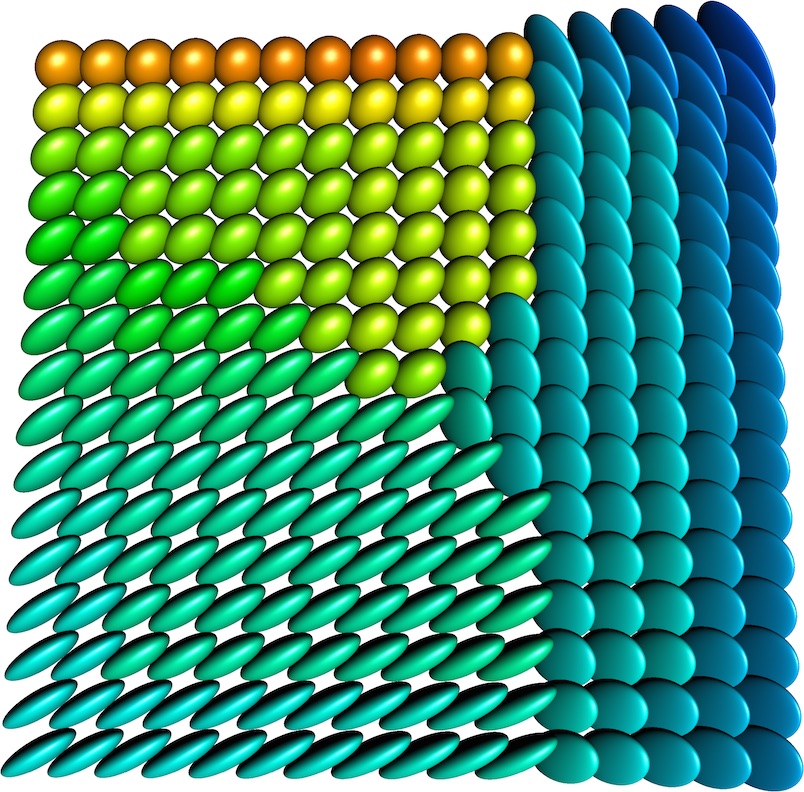}
	\caption{TV, $\alpha=3\times10^{-3}, \lambda = 20$.}\label{subfig:spd:inp:tv}
\end{subfigure}
\caption{Inpainting of an $\mathcal{P}(3)$-valued image.
\subref{subfig:spd:inp:orig} Original image.
\subref{subfig:spd:inp:lost} Image with unknown areas. 
Half-quadratic based inpainting 
\subref{subfig:spd:inp:hqbest} yields a perfect result. 
Decreasing $\varepsilon$  like in \subref{subfig:spd:inp:hq} yields a result closer to 
\subref{subfig:spd:inp:tv} TV.}
\end{figure}
%------------------------------------------------------------------------------
Our first example illustrates the inpainting capabilities of the half-quadratic minimization method by 
an artificial example.
The ${\mathcal P}(3)$-valued image of size $16\times16$ in Fig.~\ref{subfig:spd:inp:orig} has a jump at $\frac23$ in $x$ direction. 
We destroy a center square of size $12\times12$, see Fig.~\ref{subfig:spd:inp:lost}. 
We are able to reconstruct the inpainting area nearly perfectly by using $\varphi_1$ and $\varepsilon=10^{-3}$, 
see Fig.~\ref{subfig:spd:inp:hqbest}. 
Nevertheless decreasing $\varepsilon$ introduces more and more staircasing, cf.\ Fig.~\ref{subfig:spd:inp:hq}. 
This resembles the TV case shown in Fig.~\ref{subfig:spd:inp:tv}, using the model from \cite{WDS14}.

%------------------------------------------------------------------------------------
\begin{figure}[tbp]
	\begin{subfigure}{.5\textwidth}\centering
		\includegraphics[width=.9\textwidth]{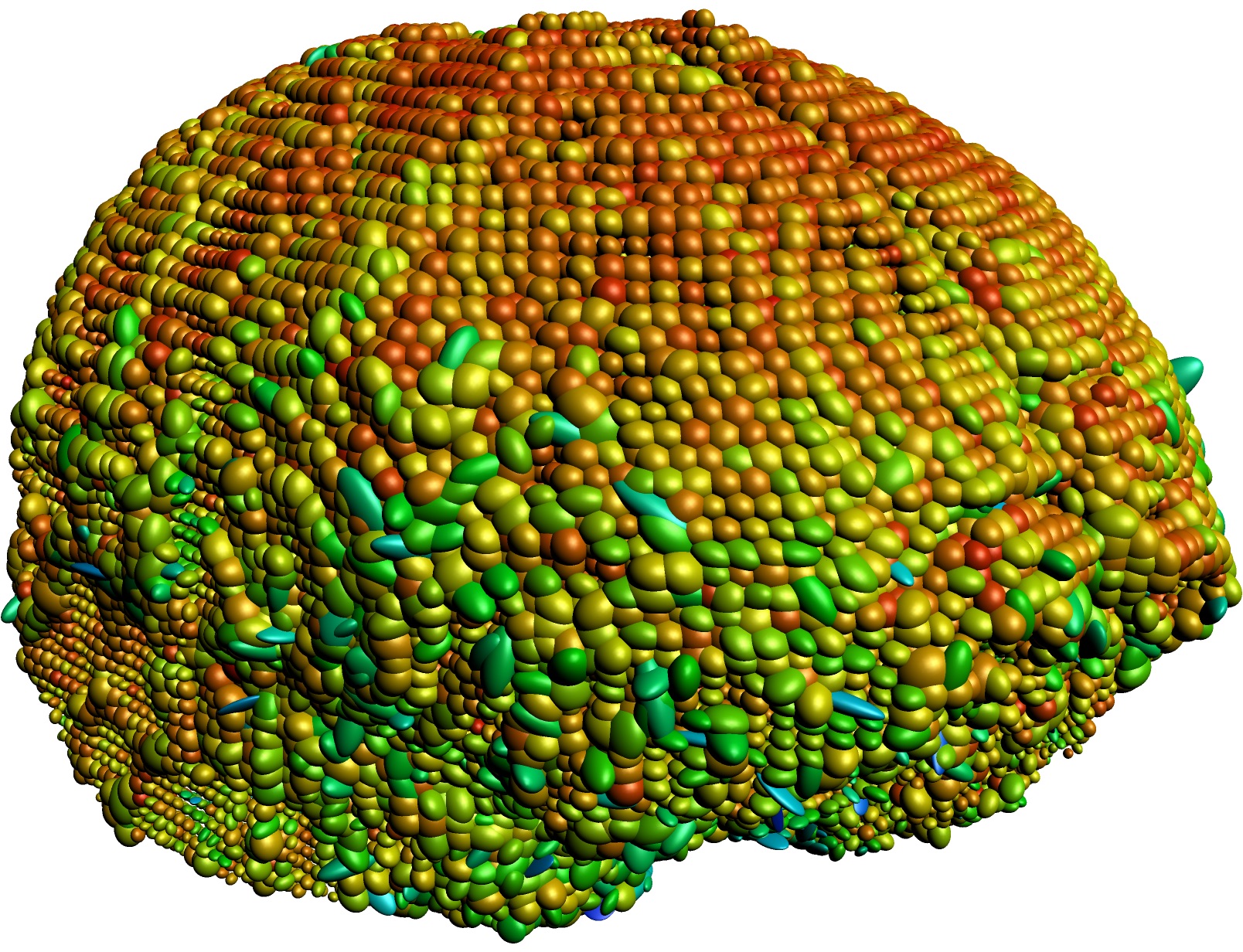}
		\caption{Original ``Camino'' data set.}\label{subfig:CaminoC:orig}
	\end{subfigure}
	\begin{subfigure}{.5\textwidth}\centering
		\includegraphics[width=.9\textwidth]{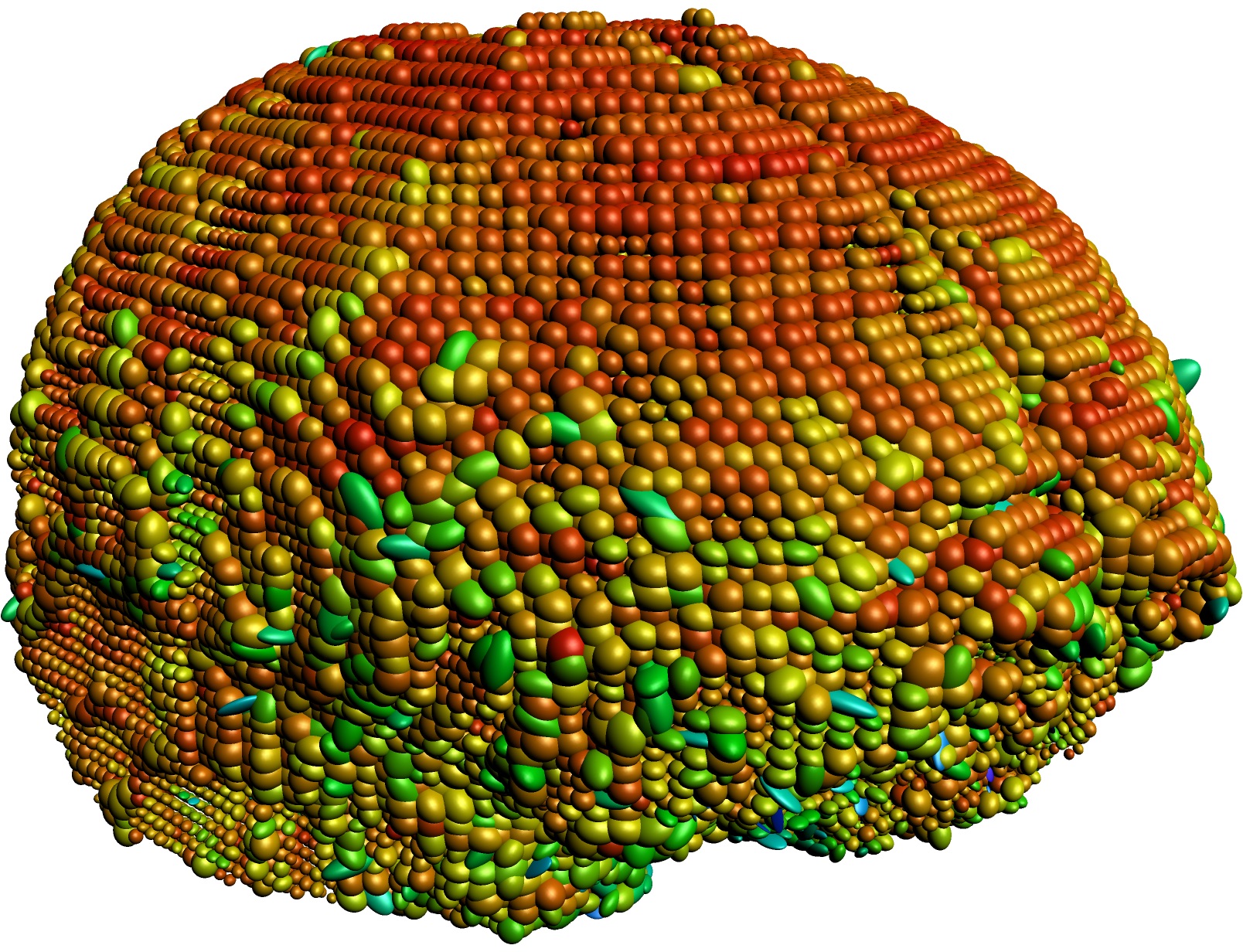}
		\caption{$\varphi_1$, \(\alpha=0.1\), \(\varepsilon=10^{-3}\).}\label{subfig:CaminoC:reg}
	\end{subfigure}
	\caption{Denoising with half-quadratic minimization of every traversal plane of the ``Camino'' data set 
	(DT-MRI of the human head).}\label{fig:Camino}
\end{figure}
%--------------------------------------------------------------------------------------

An important application of ${\mathcal P}(3)$-valued image denoising is Diffusion Tensor Magnetic Resonance Imaging (DT-MRI).
The Camino project\footnote{see~\href{http://cmic.cs.ucl.ac.uk/camino/}{http://cmic.cs.ucl.ac.uk/camino}}\cite{Camino} 
provides a DT-MRI dataset of the human head
which is freely available.\footnote{follow the tutorial 
at \href{http://cmic.cs.ussscl.ac.uk/camino//index.php?n=Tutorials.DTI}{http://cmic.cs.ucl.ac.uk/camino//index.php?n=Tutorials.DTI}}
The complete data is given as a 3D image
\(\tilde f = \bigl(\tilde f_{i,j,k}\bigr)\in\mathcal P(3)^{112\times112\times50}\),
where we apply the half-quadratic minimization to each of the traversal planes \(k\in\{1,\ldots,50\}\). 
The original dataset, cf.
Fig.~\ref{subfig:CaminoC:orig}, is plotted using the anisotropy index relative
to the Riemannian distance~\cite{MoBa06} normalized onto \([0,1)\) and colored in hue. 
The half-quadratic minimization is used with $\varphi_1$ and the
parameters \(\lambda=0.1\), \(\varepsilon=10^{-3}\) 
and a maximum change between two successive iterations being $10^{-12}$ as a stopping criterion. 
We obtain the result shown in Fig.~\ref{subfig:CaminoC:reg}. For the complete dataset of \(168{,}169\) nonzero matrices, 
the algorithm needed $2{,}492$ seconds to compute the result.
%
%-------------------------------------------------------------------------------
\subsection{$\operatorname{SO}(3)$-valued data}
Processing images with $\operatorname{SO}(3)$-valued entries is fundamental in the analysis
of polycrystalline materials by means of Electron Backscattered Diffraction
(EBSD), cf.~\cite{KuWrAdDi93,ASWK93}. Since the microscopic grain structure
affects macroscopic attributes of materials such as ductility, electrical and
lifetime properties, there is a growing interest in the grain structure of
crystalline materials such as metals and minerals. EBSD provides us for each
position on the surface of a specimen with a so called Kikuchi pattern, which
allows the identification of the structure (material index) and the
orientation of the crystal at this position relative to a fixed coordinate
system ($\operatorname{SO}(3)$ value). Since the atomic structure of a crystal is invariant
under its specific symmetry group $S \subset \operatorname{SO}(3)$ the orientation is only
given as an equivalence class $[m_{0}] = \{m_{0} s\mid s \in S\} \in \operatorname{SO}(3)/S$,
$m_{0} \in \operatorname{SO}(3)$.

Fig.~\ref{subfig:so3:magsample} displays a typical EBSD image consisting of lattice
orientations of deformed Magnesium collected by~\cite{ShiLec14}. Each pixel of
the image corresponds to a position on the surface of a Magnesium
specimen. The color of the pixels is chosen corresponding to the orientation
measured at this position according to the following color mapping: for a
fixed vector $\vec{r} \in \mathbb S^2$ we consider the mapping
$\Phi \colon \operatorname{SO}(3)/S \to \mathbb S^{2}/S$, $[m] \mapsto [m^{-1} \vec r]$. Next we
colorize the quotient $\mathbb S^{2}/S$ as it is depicted in
Fig.~\ref{subfig:so3:sphere}. From the colorization scheme the symmetry group
$S \subset \operatorname{SO}(3)$ of Magnesium becomes visible which has six rotations with
respect to a 6-folded axis ($k \pi/3 $, $k=1,\ldots,6$ rotations around $c$
direction) and six rotations with respect to 2-folded axis $a_{1}$, $a_{2}$
perpendicular to that.
% --------------------------------------------------------------
\begin{figure}[tbp]
	\centering
	\begin{subfigure}{0.35\textwidth}
		\centering
		\includegraphics[width=0.9\textwidth]{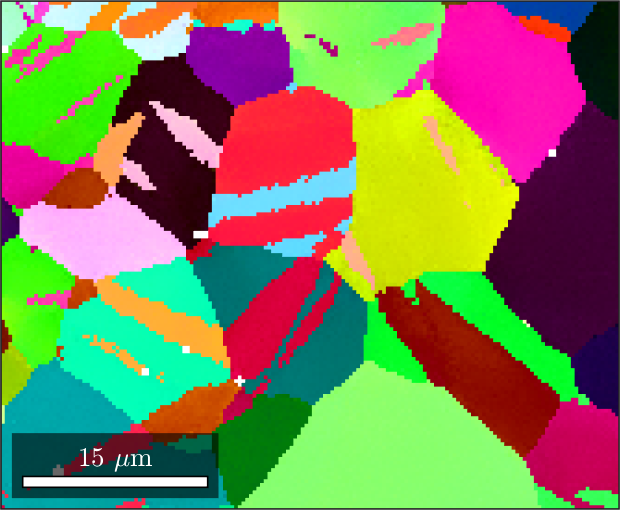}
		\caption{Visualization of a Magnesium sample by \cite{BaHiSc11}.}\label{subfig:so3:magsample}
	\end{subfigure}
	\quad
	\begin{subfigure}{0.25\textwidth}
		\centering
		\includegraphics[width=0.9\textwidth]{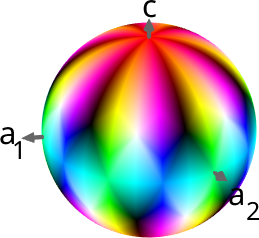}
		\caption{Colorization of the sphere.}\label{subfig:so3:sphere}
	\end{subfigure}\quad	
	\begin{subfigure}{0.20\textwidth}
		\centering
		\includegraphics[width=0.9\textwidth]{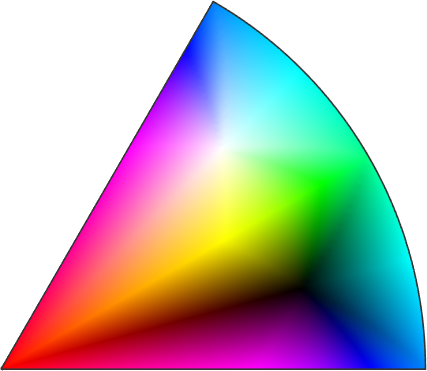}
		\caption{Spherical triangle.}\label{subfig:so3:triangle}
		\includegraphics[width=0.9\textwidth]{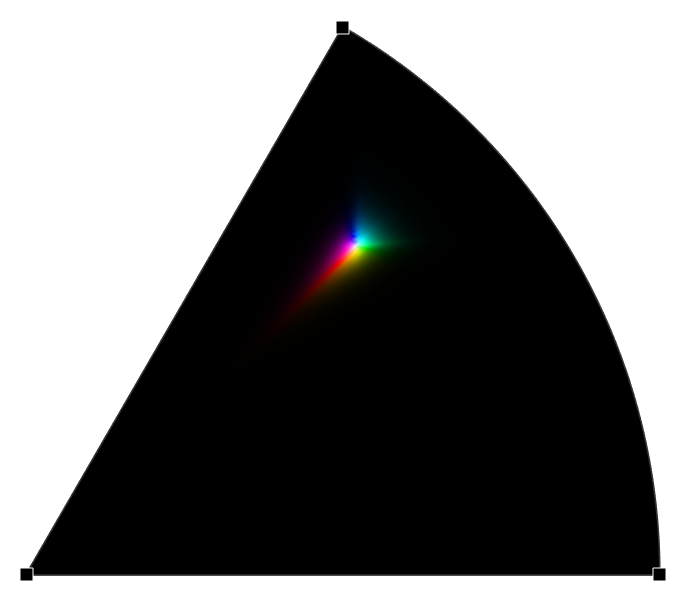}
		\caption{Grain.}\label{subfig:so3:small:triangle}
	\end{subfigure}
	\caption{\subref{subfig:so3:magsample} The raw EBSD data of a Magnesium sample, 
	\subref{subfig:so3:sphere} the colorization of the sphere used to assign to each rotation a certain color 
	according to the mapping $\operatorname{SO}(3)/S \ni m \mapsto m^{-1} (0,0,1)^{T} \in \mathbb S^{2} /S$, \subref{subfig:so3:triangle} 
	colorization of a spherical triangle,  
	\subref{subfig:so3:small:triangle} stretched colorization for one grain.
	}\label{ebsdraw}
\end{figure}
%--------------------------------------------------------------

EBSD images usually consist of regions with similar orientations called
grains. For certain macroscopic properties the pattern of orientations within
single grains is of importance~\cite{BaHiJuPaScWe10}, e.g., for the
computation of geometrically necessary
dislocations~\cite{nye53,SuAdKi00} the gradient of the rotations within single
grains has to be determined. As the rotation determination by Kikuchi
patterns is sometimes fragile, the rotation valued images determined by EBSD
are often corrupted by noise and suffer from missing data so that denoising and
inpainting techniques have to be applied~\cite{GuAg10}.
For detecting grains in the raw EBSD data we applied a thresholding
algorithm~\cite{BaHiSc11}. Fig.~\ref{grain1} displays a single
grain with its rotations. Since the rotations vary very little within a single
grain we applied a sharper colorization, cf.~Fig.~\ref{subfig:so3:small:triangle} 
to make the noise and the rotation gradient visible.

We want to apply half-quadratic minimization to denoise EBSD images.
Since crystallographic symmetry groups are finite the quotient $\operatorname{SO}(3)/S$ is
locally isomorphic to $\operatorname{SO}(3)$. In particular, the formulas given in
Appendix~\ref{app:so3} can be applied.  In a first experiment we apply half-quadratic minimization using
$\varphi_1$ to the rotation-valued image depicted in Fig.~\ref{grain1}
which leads to the smooth image in Fig.~\ref{grain2}. 
In a second experiment we 
randomly removed $30\%$ of the data  shown in Fig.~\ref{grain3}. Using
half-quadratic minimization for jointly inpainting and denoising the image we
obtain the result shown in Fig.~\ref{grain4} which looks very similar to
those in Fig.~\ref{grain2}.
In a third experiment we applied half-quadratic minimization simultaneously to
several grains. 
The challenge from the mathematical point of view is that
$\operatorname{SO}(3)$ is not an Hadamard manifold. Convergence can be guaranteed only locally,
which is the case for single grains but may be not true when considering several grains
simultaneously. From the practical point of view this case is
especially interesting as missing data usually occur at grain boundaries,
i.e., between grains.  However, for our data we have got promising results.
Fig.~\ref{grains1} shows the grain from the previous example
(pink color) and two other grains in its neighborhood.  Note that the top
middle area (light green) and top right area (brown-green) belong to the same
grain. Pixels with missing data are plotted white.  Half-quadratic
minimization restoration with $\varphi_1$ improves the image as can be seen in
Fig.~\ref{grains2}. For our last experiment we again randomly remove $30\%$
of the data, cf.~Fig~\ref{grains3}. Restoration with $\varphi_1$ leads to the result in Fig.~\ref{grains4}, which is again hardly to distinguish from Fig.~\ref{grains2}.
Using $\varphi_3$ leads to even better results as depicted in Fig.~\ref{grains5}. We can adjust the smoothing in such a way
that the edge distinguishing two grains is not smoothed, while smaller
rotation changes are smoothed. This is advantageous in the large top grain.
Finally Fig.~\ref{grains6} shows the zoom to the (pink) grain from Fig.
\ref{grain} with adapted color map.

\section{Conclusions} \label{conclusions}
We adapted the principles of half-quadratic minimization to the setting of complete, connected Riemannian manifolds. 
In particular, the notation of the $c$-transform provides an interesting point of view.
For Hadamard manifolds we proved the existence and uniqueness of the minimizer of the  corresponding  functionals as well as the
convergence for the alternating minimization algorithm under moderate assumptions.
The multiplicative half-quadratic minimization method resembles a quasi-Newton method  \cite{NC07} and appears to be very efficient in our numerical examples.
There are numerous applications of the approach.
In this paper, images having values in a manifold such as the 2-sphere or the symmetric positive definite matrices were denoised. 
The method was also used for inpainting missing information into images consisting of either rotation matrices or symmetric positive definite matrices. 
In the chromaticity-brightness color model, the inpainting technique was applied to the task of colorization.
The method has further potential in EBSD.

Topics of future research are the derivation of convergence proofs for more general manifolds
under special assumptions on the local behavior of the data. 
Furthermore, different data terms must be included for other applications,
and the inclusion of higher order differences into the regularization term of the model is of interest. 
%------------------------------------------------------------------------------
\begin{figure}[tbp]
	\centering
	\begin{subfigure}{0.49\textwidth}
		\centering
	\includegraphics[width=.7\textwidth]{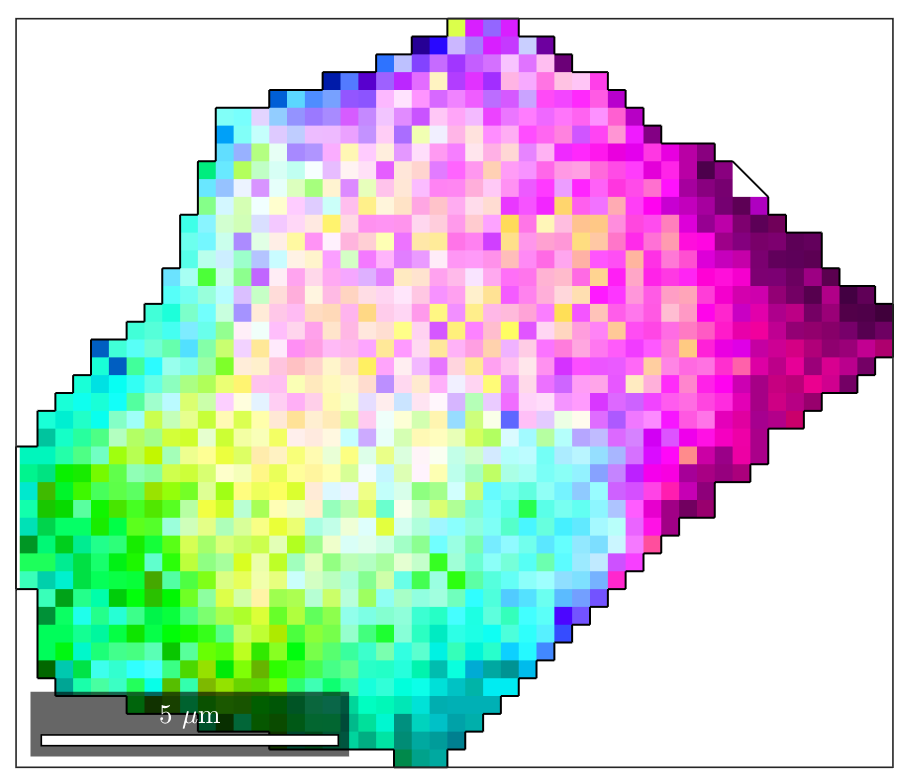}
		\caption{Original grain.}\label{grain1}
	\end{subfigure}	
	\begin{subfigure}{0.49\textwidth}
		\centering
		\includegraphics[width=.7\textwidth]{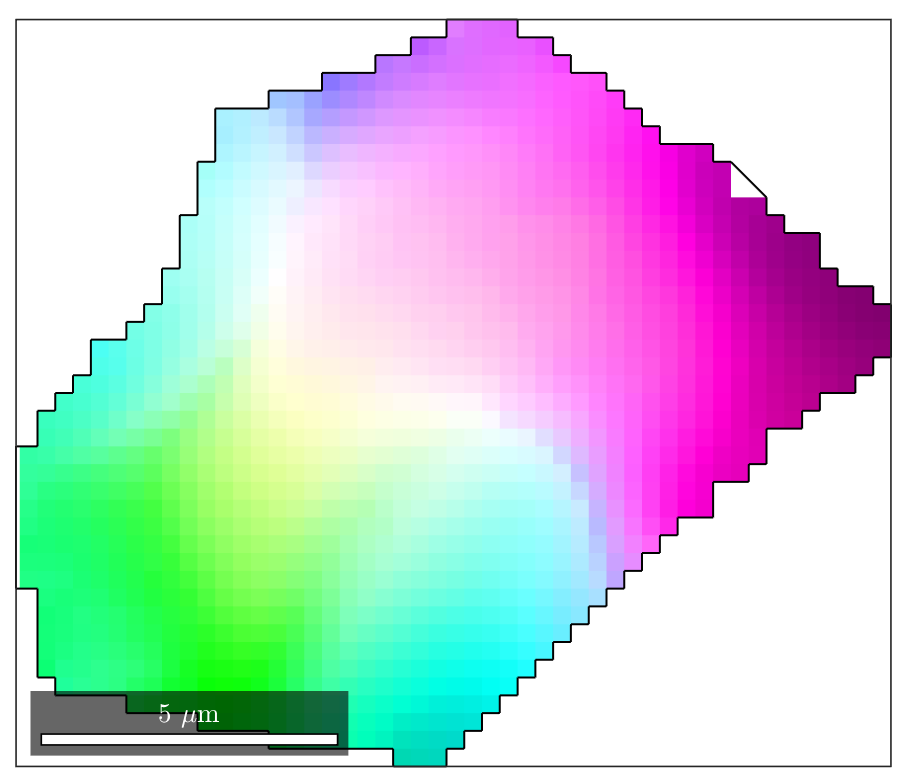}
		\caption{Grain smoothed with $\varphi_1$.}\label{grain2}
	\end{subfigure}
	\begin{subfigure}{0.49\textwidth}
		\centering
		\includegraphics[width=.7\textwidth]{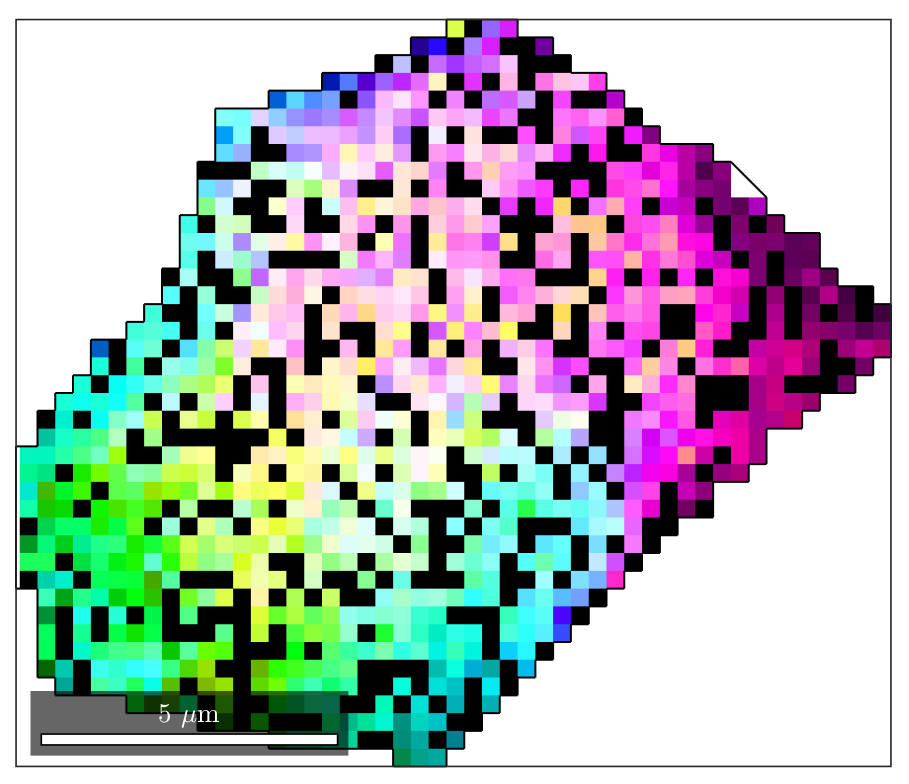}
		\caption{Grain with $30\%$ lost data.}\label{grain3}
	\end{subfigure}
	\begin{subfigure}{0.49\textwidth}
		\centering
		\includegraphics[width=.7\textwidth]{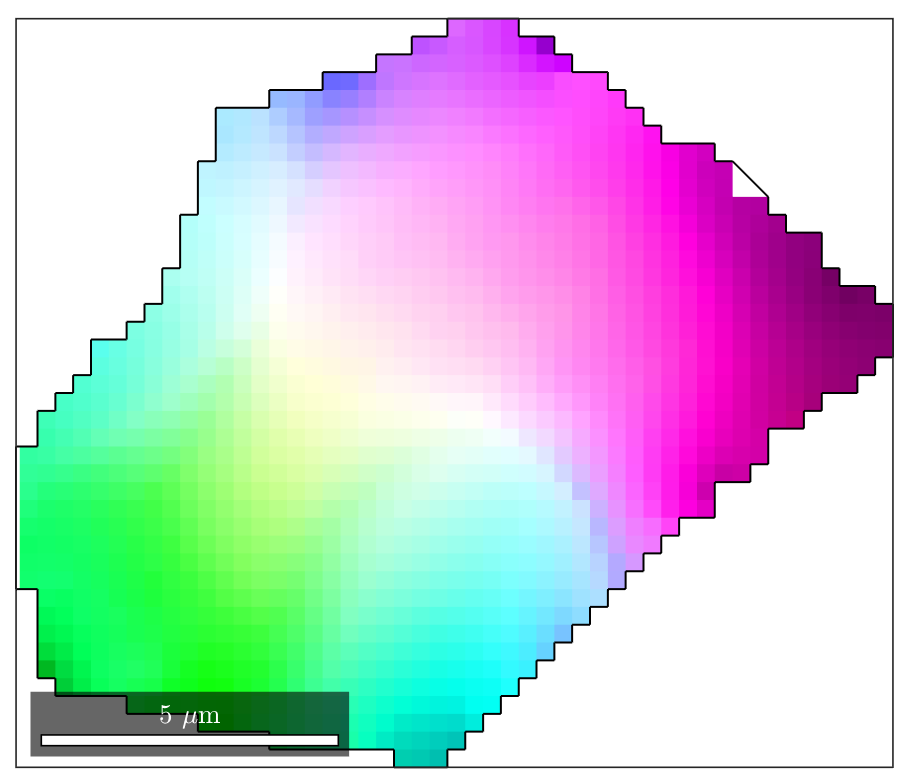}
		\caption{Grain restored with $\varphi_1$.}\label{grain4}
	\end{subfigure}
	\caption{\subref{grain1} Original grain from a Magnesium specimen (grain boundary in black), 
	\subref{grain2} smoothed grain 	using $\varphi_1$ with parameters $\lambda= 0.1$, $\varepsilon = 10^{-2}$,
		\subref{grain3} grain with $30\%$ lost data, marked in white, \subref{grain4} inpainted and smoothed
		grain with $\varphi_1$ and the same parameters.}\label{grain}
\end{figure}
\begin{figure}[tbp]
	\centering
	\begin{subfigure}[t]{0.32\textwidth}
		\includegraphics[width=\textwidth]{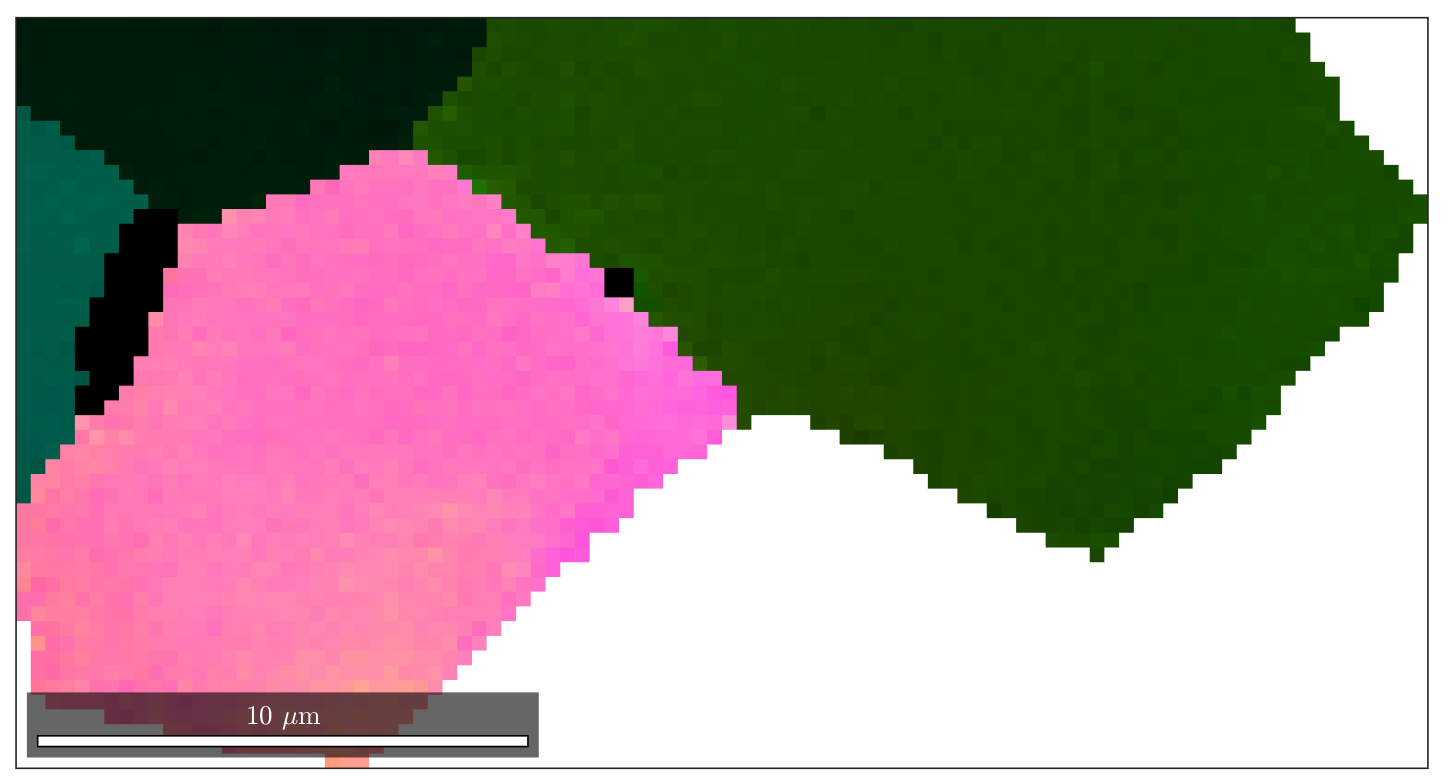}
		\caption{Three grains.}\label{grains1}
	\end{subfigure}
	\begin{subfigure}[t]{0.32\textwidth}
		\includegraphics[width=\textwidth]{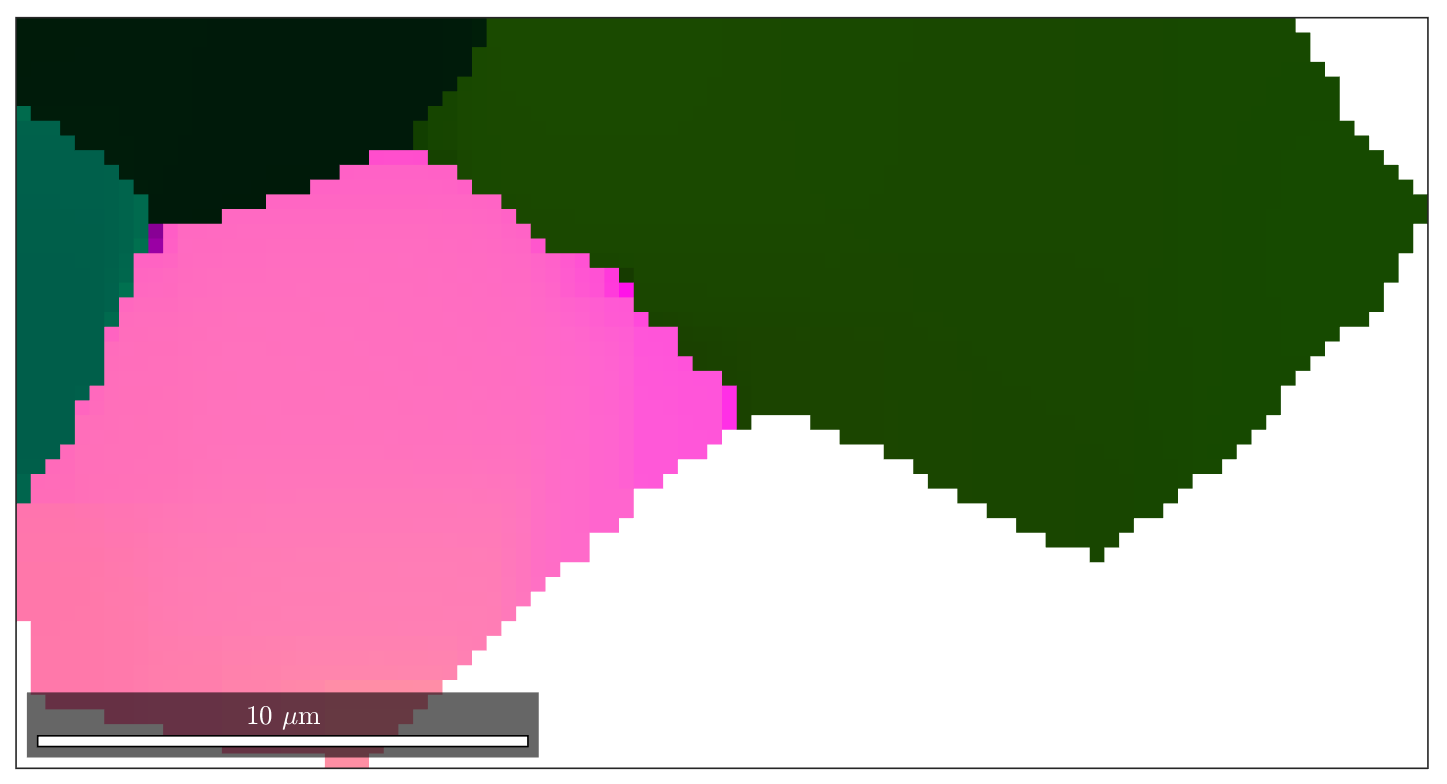}
		\caption{Grains smoothed with $\varphi_1$.}\label{grains2}
	\end{subfigure}
	\begin{subfigure}[t]{0.32\textwidth}
		\includegraphics[width=\textwidth]{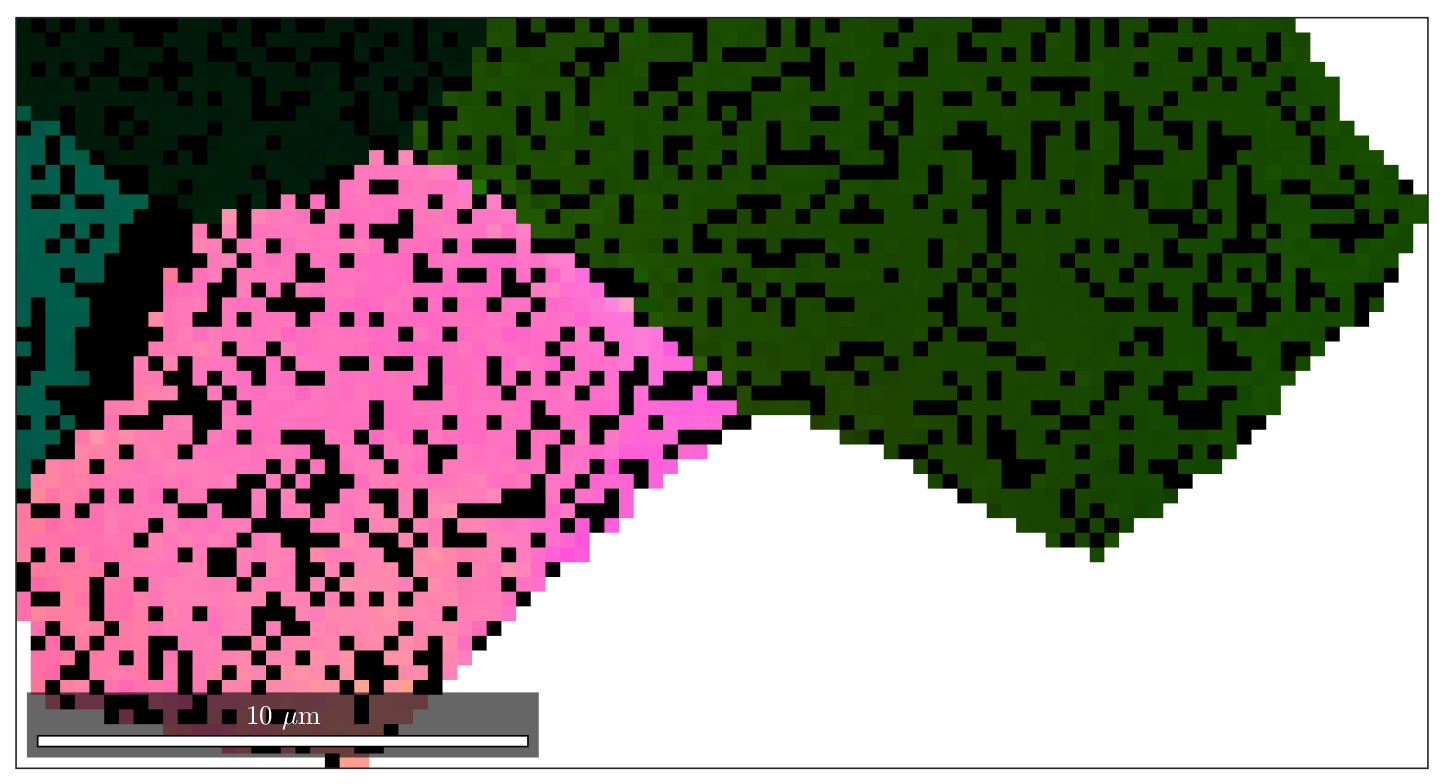}
		\caption{Grains with $30\%$ lost data.}\label{grains3}
	\end{subfigure}
	\begin{subfigure}[t]{0.32\textwidth}
		\includegraphics[width=\textwidth]{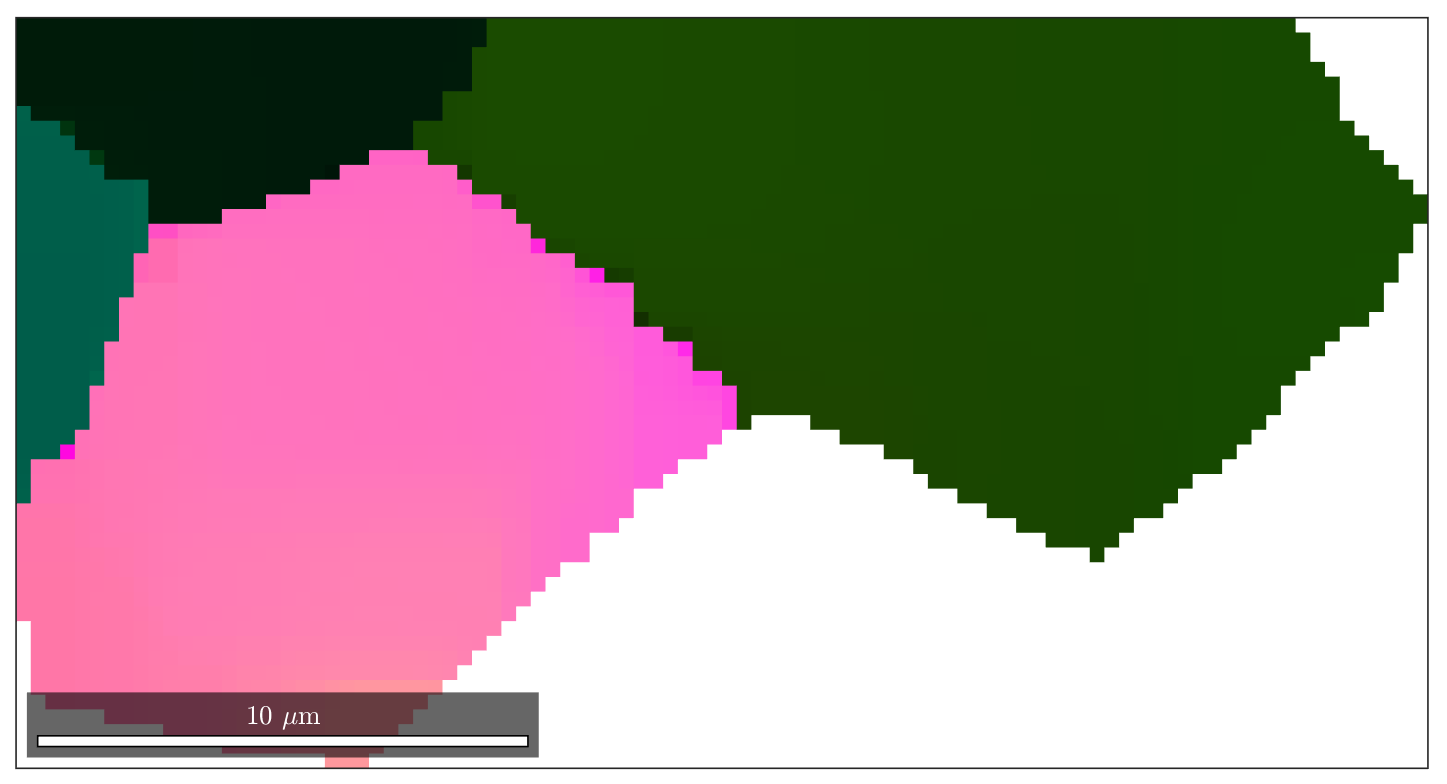}
		\caption{Grains restored with $\varphi_1$.}\label{grains4}
	\end{subfigure}
	\begin{subfigure}[t]{0.32\textwidth}
		\includegraphics[width=\textwidth]{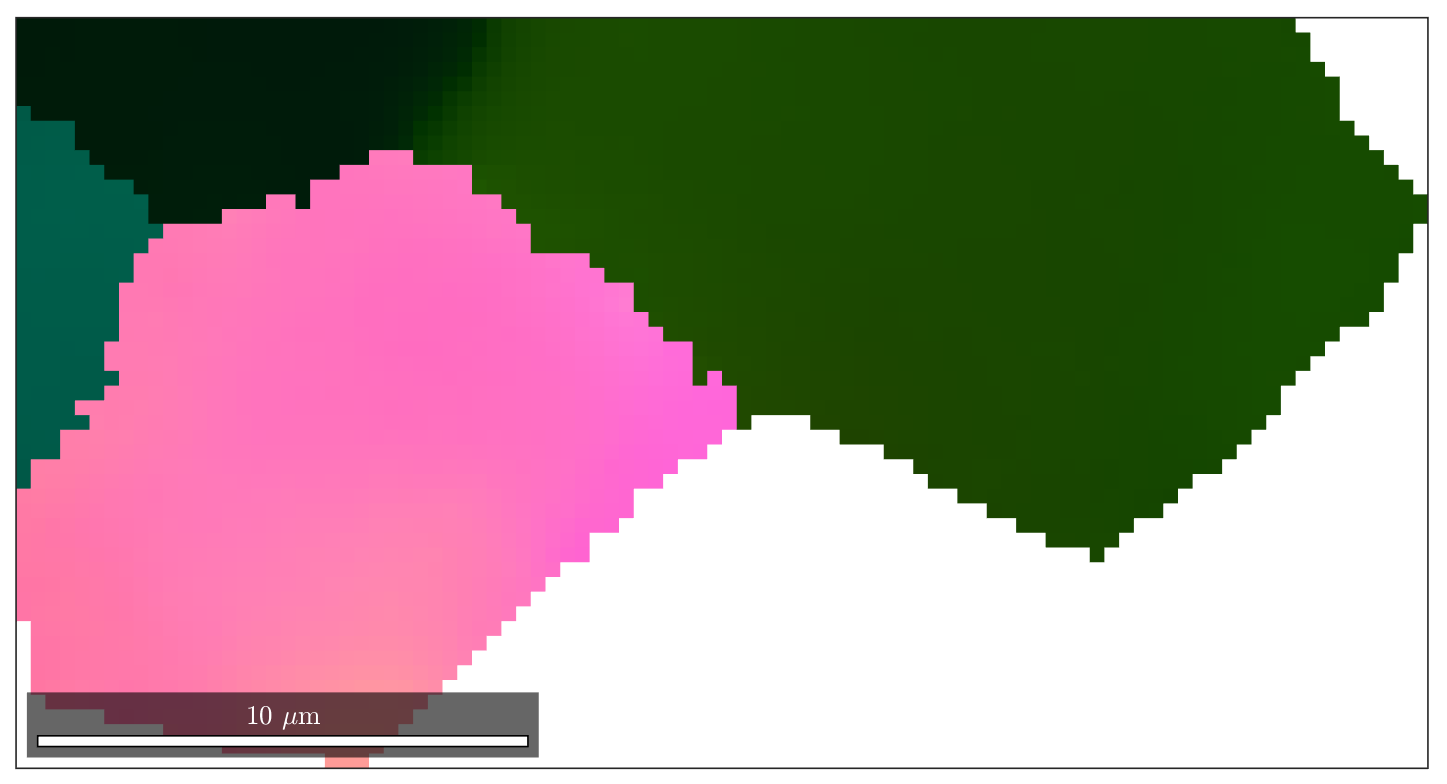}
		\caption{Grains restored with $\varphi_3$.}\label{grains5}
	\end{subfigure}
	\begin{subfigure}[t]{0.32\textwidth}
		\includegraphics[width=\textwidth]{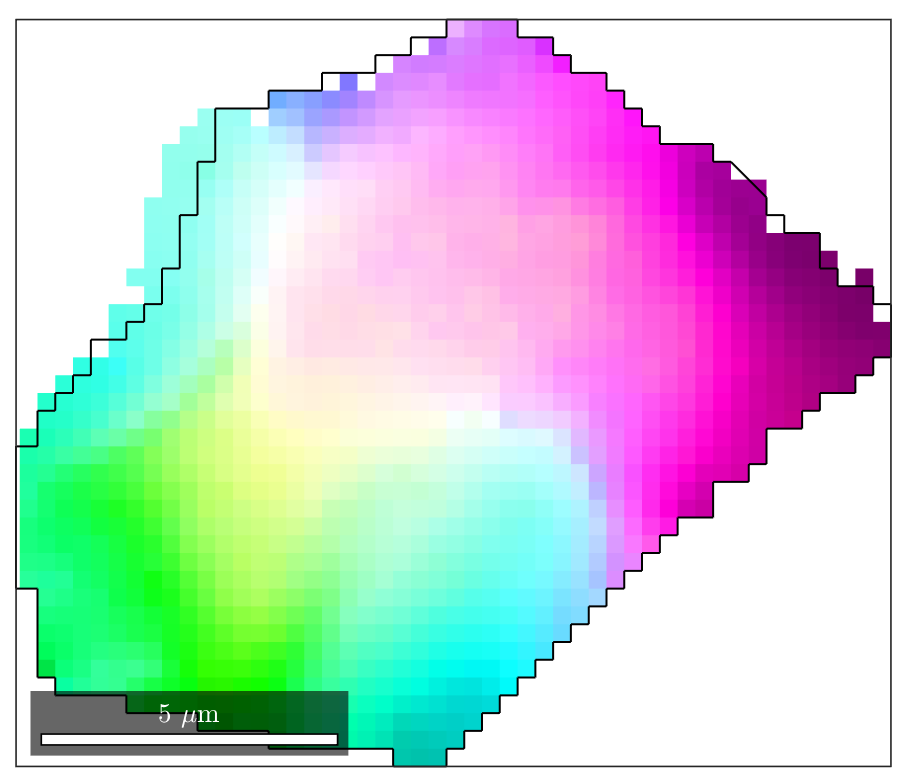}
		\caption{Grain from Fig.~\ref{grain}.}\label{grains6}
	\end{subfigure}
	\caption{\subref{grains1} Three grains from a Magnesium specimen with missing data in white.
	\subref{grains2} Smoothed (and ipainted) grains with $\varphi_1$ and parameters $\lambda= 0.15$ $\varepsilon = 10^{-4}$.
	\subref{grains3} Grains with $30\%$ data lost.
	Restored grains from (c) with
	\subref{grains4} $\varphi_1$ and parameters  $\lambda= 0.05$ $\varepsilon = 0.5\times10^{-4}$, 
	\subref{grains5} $\varphi_3$ and parameters $\lambda=0.1,\ \varepsilon=\sqrt{30}$.
	\subref{grains6} Grain from Fig.~\ref{grain} after restoration with $\varphi_3$ (original boundary in black).}\label{grains}
\end{figure}
%----------------------------------------------------------------------------------
%\section{Conclusions} \label{sec:conc}
%----------------------------------------------------------------------------------
%
\appendix
%----------------------------------------------------------------------------------
\section{Proofs} \label{app:proofs}
%-------------------------------------------------------------------------------

\begin{proof}[Proof of Proposition \ref{prop_1}.]
1. In the additive case we have
\begin{align*}
\varphi^c(s) 
&\coloneqq \inf_{t \in \mathbb R} 
\biggl\{ 
\frac12 \Bigl(\sqrt{a} t - \frac{1}{\sqrt{a}} s \Bigr)^2 - \varphi(t)
\biggr\}
= \inf_{t \in \mathbb R} 
\left\{ 
-st + \tfrac12 a t^2 -  \varphi(t)
\right\} + \frac{1}{2a} s^2\\
&= - \sup_{t \in \mathbb R} 
\Bigl\{ 
 st - \underbrace{\bigl(\tfrac12 a t^2 - \varphi(t) \bigr)}_{\Phi}
\Bigr\} + \frac{1}{2a} s^2\\
&= - \Phi^*(s) + \frac{1}{2a} s^2
\end{align*}
By assumption on $\Phi$ we know that $\Phi = \Phi^{**}$ which implies
\begin{align*}
\varphi^{cc}(t) 
&= \inf_{s \in \mathbb R} 
\bigl\{ 
-st + \tfrac{1}{2a} s^2 - \varphi^c(s) 
\bigr\} + \tfrac12 a t^2
=
\inf_{s \in \mathbb R} 
\bigl\{ -ts + \Phi^*(s) \bigr\} + \tfrac12 a t^2
\\
&= - \Phi(t) + \tfrac12 a t^2 = \varphi(t).
\end{align*}
This finishes the proof of i). The function 
\[h(t) \coloneqq c(t,s) - \varphi(t) = \frac12 a t^2 - \varphi(t) -  s t + \frac{1}{2a} s^2\]
is continuous, convex and by \eqref{existence_add} coercive so that the global minimizer in \eqref{dual_1}
is attained for $0 = h'(t) = at - \varphi'(t) -  s$, i.e., for $\bigl(t,at - \varphi'(t)\bigr)$ which proves ii).
\\
2. In the multiplicative case we obtain, since $\varphi$ is even,
\begin{align*}
\varphi^c(s) 
&\coloneqq \inf_{t \in \mathbb R} 
\bigl\{ 
t^2 s - \varphi(t)
\bigr\}
= \inf_{t \ge 0} 
\bigl\{ 
t^2 s - \varphi(t)
\bigr\}\\
&=
\inf_{r \ge 0} 
\bigl\{ 
r s - \varphi(\sqrt{r})
\bigr\}
=
-\sup_{r \ge 0} 
\Bigl\{ 
- r s - \bigl(-\varphi(\sqrt{r})\bigr)
\Bigr\}\\
&=- \Phi^*(-s).
\end{align*}
We have
\begin{align*}
\varphi^{cc}(t) &= \varphi^{cc} (-t)
= \inf_{s \in \mathbb R} 
\bigl\{ 
t^2 s - \varphi^c(s) 
\bigr\} .
\end{align*}
so that we can restrict our attention to $t \ge 0$.
By assumption, $\Phi$ is convex and lsc. Thus, 
$\Phi = \Phi^{**}$ and we obtain for $t \ge 0$ that
\begin{align*}
\varphi^{cc}(\sqrt{t}) &= \inf_{s \in \mathbb R}
\bigl\{t s - \varphi^c(s) \bigr\}
= - \sup_{s \in \mathbb R} \bigl\{- t s - \Phi^*(-s) \bigr\}
= - \Phi(t) = \varphi(\sqrt{t}).
\end{align*}
This yields i). To see ii) we first note that
condition \eqref{existence_mult} implies for $s \ge 0$ that the objective function 
in \eqref{dual_1} is coercive such that the infimum is attained.
For $s<0$, we have $\varphi^c(s) = -\infty$. For $s \ge 0$, we obtain
\[
\argmin_{t \ge 0} \bigl\{t^2 s - \varphi(t) \bigr\} 
= 
\Bigl( \argmin_{r \ge 0} \bigl\{r s - \varphi(\sqrt{r} ) \bigr\} \Bigr)^\frac12
\]
By assumption on $\Phi$, the function
 $h(r) \coloneqq rs - \varphi(\sqrt{r})$ is convex in $\mathbb R_{\ge 0}$.
%In particular $h'$ is monotone.
A global minimizer of $h$ is attained
either 
for the solution of 
$0 = h'(r) = s - \frac{1}{2\sqrt{r}} \varphi'(\sqrt{r})$
if this solution is positive 
or
for $r=0$ if $\lim_{r \rightarrow 0+} h'(r) \ge 0$, i.e., $s \ge \frac12\varphi''(0+)$.
Therefore $(t,s) = \bigl(0, \frac12\varphi''(0+) \bigr)$
is a solution.

Finally, the concavity of $\varphi(\sqrt{t})$ for $t \ge 0$ implies
that $\varphi'(\sqrt{t})/(2 \sqrt{t})$ and thus $\varphi(t)/(2 t)$ is decreasing.
Under the additional assumption in iii) we get $s \in \bigl( 0,\frac{\varphi''(0+)}{2} \bigr]$.
%\hfill $\Box$
%\\[2ex]
\end{proof}
%-------------------------------------------------------------------------------
\begin{proof}[Proof of Proposition \ref{prop_2}.] i) 
If ${\mathcal V} = {\mathcal G}$, let
$\lVert d(u,f)\rVert_2 \rightarrow \infty$, 
where 
$d(u,f) \coloneqq \big( d(u_i,f_i) \big)_{i \in {\mathcal G}}$. 
Then 
$F(u) \coloneqq \frac12 \sum_{i \in {\mathcal V}} d^2(u_i,f_i)$ 
goes to infinity 
and the functionals $J_\nu$, $\nu = 1,2$, are coercive.
In the case ${\mathcal V} \not = {\mathcal G}$ choose $i_0 \in  {\cal V}$ and
let $u_0$ be the constant image with entries $f_{i_0}$.
Let $\lVert d(u,u_0) \rVert_2 \rightarrow \infty$.
Assume that $J_\nu (u)$ remains finite, so that
in particular 
$d(f_{i_0},u_{i_0})$ 
and 
$d(u_i,u_j)$, $j \in {\cal N}(i)^+$, $i \in {\cal G}$ are finite.
By the construction of the neighborhoods ${\cal N}(i)^+$ there exists for every $j \in {\cal G}$
a path $i_0,i_1,\ldots,i_{k_j} = j$ with $i_{l+1} \in {\cal N}(i_l)^+$
and
\[
d(f_{i_0},u_j) \le d(f_{i_0},u_{i_0}) + d(u_{i_0},u_{i_1}) + \ldots + d(u_{i_{{k_j}-1}},u_{j}).
\]
Since the right-hand side remains finite this contradicts $\lVert d(u,u_0) \rVert_2 \rightarrow \infty$.
Hence  $J_\nu$ $\nu = 1,2$ are coercive.
\\
ii)
By (D2) we have that $F(u)$ is convex and
strictly convex if ${\mathcal V} = {\mathcal G}$.
If a function $h\colon {\mathcal H}^\kappa \rightarrow \mathbb R$ is convex, then, 
for any geodesic $\gamma\colon [0,1] \rightarrow {\mathcal H}^\kappa$ joining $x,y \in {\mathcal H}^\kappa$, we obtain since $\varphi$ 
is increasing and convex that
\begin{equation} \label{help_1}
\varphi\circ h \bigl(\gamma(t)\bigr) \le \varphi\bigl(t h(x)  + (1-t) h(y)\bigr) \le t (\varphi\circ h) (x) + (1-t) (\varphi\circ h) (y)
\end{equation}
so that $\varphi\circ h$ is convex. If $\varphi$ is strictly convex the last inequality is strong so that
$\varphi\circ h$ is strictly convex.
With $h \coloneqq h_{ij} = d(u_i,u_j): {\mathcal H}^2 \rightarrow \mathbb R$ this implies by (D1) that $J_1$ is convex, resp. strictly convex.
Concerning $J_2$ notice that the convexity of $h_i(x_0,x_i)$, $i=1,\ldots,\kappa-1$, on ${\mathcal H}^2$ 
implies convexity of
$h(x_0, \ldots,h_{\kappa-1}) \coloneqq \bigl( \sum_{i=1}^{\kappa-1} h_i^2(x_0,x_i) \bigr)^\frac12$
on ${\mathcal H}^\kappa$ by
\begin{align*}
h^2\bigl(\gamma(t)\bigr) 
&= \sum_{i=1}^{\kappa-1} h_i^2\bigl( \gamma_0(t),\gamma_i(t) \bigr) 
\le
\sum_{i=1}^{\kappa-1} \Bigl( t h_i\bigl(\gamma_0(0),\gamma_i(0)\bigr) + (1-t) h_i\bigl(\gamma_0(1),\gamma_i(1)\bigr) \Bigr)^2\\
&= 
t^2  h^2\bigl(\gamma_0(0) \bigr) + (1-t)^2 h^2\bigl(\gamma_0(1)\bigr)
+ 2 t(t-1) \sum_{i=1}^{\kappa-1} h_i\bigl( \gamma_0(0),\gamma_i(0) \bigr)  h_i\bigl( \gamma_0(1),\gamma_i(1) \bigr) 
\end{align*}
and by the Schwarz inequality
\begin{align*}
h^2\bigl(\gamma(t)\bigr) &\le
t^2  h^2\bigl(\gamma_0(0) \bigr) + (1-t)^2 h^2\bigl(\gamma_0(1)\bigr) + 2 t(t-1) h\bigl(\gamma_0(0) \bigr) h\bigl(\gamma_0(1) \bigl)\\
&= \Bigl( t  h\bigl(\gamma_0(0) \bigr) + (1-t) h\bigl(\gamma_0(1)\bigr) \Bigr)^2.
\end{align*}

In the case ${\mathcal V} = {\mathcal G}$, strict convexity follows by the strict convexity of the data term 
$\sum_{i \in {\mathcal G} }d^2(f_i,u_i)$ and for strictly convex $\varphi$ by the strict convexity in \eqref{help_1}.
%\hfill $\Box$
\end{proof}
%-----------------------------------------------------------------------
\begin{proof}[Proof of Theorem \ref{convergence}.]
By Remark \ref{rem_u} we know that
$\lim_{k \rightarrow \infty} {\mathcal J}_\nu \bigl(u^{(k)},v^{(k)}\bigr) =: \bar b$
and that there exists a subsequence $\bigl\{\bigl(u^{(k_j)},v^{(k_j)}\bigr)\bigr\}_j$
which converges to some $(\bar u, \bar v)$. Since ${\mathcal J}_\nu$ is continuous we have 
$\lim_{j \rightarrow \infty} {\mathcal J}_\nu \bigl(u^{(k_j)},v^{(k_j)}\bigr) = {\mathcal J}_\nu (\bar u, \bar v) = \bar b$.
Let
\[
\tilde v \coloneqq s\left( {\rm d}_{\bar u} \right) = \argmin_v {\mathcal J}_\nu(\bar u,v) %eigentlich \in \argmin
\quad {\rm and} \quad
\tilde u \coloneqq \argmin_u {\mathcal J}_\nu(u,\tilde v). 
\]
The continuity of $s$ and ${\rm d}$ implies that
$\lim_{j \rightarrow \infty} v^{(k_j+1)} = \lim_{j \rightarrow \infty} s\bigl( {\rm d}_{u^{(k_j)} }\bigr) = s({\rm d}_{\bar u}) = \tilde v$
and the continuity of $T(u) \coloneqq \argmin_z {\mathcal J}_\nu \bigl(z,s({\rm d}_u)\bigr)$ that
$\lim_{j \rightarrow \infty} u^{(k_j+1)} = \lim_{j \rightarrow \infty} T\bigl(u^{(k_j)}\bigr) = T(\bar u) = \tilde u$.
By \eqref{abstieg} we conclude
\[
\bar b = \lim_{j \rightarrow \infty} {\mathcal J}_\nu \bigl(u^{(k_j+1)},v^{(k_j+1)}\bigr) = 
{\mathcal J}_\nu(\tilde u, \tilde v) \le {\mathcal J}_\nu(\bar u, \tilde v) \le {\mathcal J}_\nu (\bar u, \bar v) = \bar b.
\]
Thus, ${\mathcal J}_\nu(\tilde u, \tilde v) = {\mathcal J}_\nu(\bar u, \tilde v) = {\mathcal J}_\nu (\bar u, \bar v)$
and since ${\mathcal J}_\nu (\bar u, \cdot)$ and ${\mathcal J}_\nu (\cdot, \tilde v)$ have unique minimizers we obtain that
$\tilde u = \bar u$ and $\tilde v = \bar v$.
Consequently, $\bar v = s({\rm d}_{\bar u})$ and
$\bar u = \tilde u = \argmin_u {\mathcal J}_\nu\bigl(u,s({\rm d}_{\bar u})\bigr)$,
which by Remark \ref{rem:critical} implies 
that $\bar u$ is a minimizer of $J_\nu$.

Assume that $\lim_{k \rightarrow \infty} u^{(k)} \not \rightarrow \bar u$.
Then there exists $\varepsilon > 0$ such that infinitely many $u^{(k_i)}$ not contained in 
the open ball $B_\varepsilon(\bar u)$ of radius $\varepsilon$ centered at $\bar u$.
Since $\bigl\{u^{(k_i)}\bigr\}$  is bounded, there exists a convergent subsequence $u^{(k_{i_j})}$
which converges to some point $u^* \not = \bar u$ in the closed set ${\rm M} \backslash B_{\varepsilon}$.
Then $\lim_{j \rightarrow \infty} J_\nu \bigl( u^{(k_{i_j})} \bigr) = J_\nu(u^*) = J(\bar u)$
which contradicts the fact that $J_\nu$ has a unique minimizer.
%\hfill $\Box$
\end{proof}
%-------------------------------------------------------------------------------

\section{Exponential and Logarithmic Maps}\label{app:B}
\subsection{The Sphere \texorpdfstring{${\mathbb S}^2$}{S2}} \label{A-sphere}
%---------------------------------------------------------------------
We use the parametrization
\[
x(\theta , \varphi) \coloneqq 
\begin{pmatrix}
\cos \varphi \cos \theta\\
\sin \varphi \cos \theta\\
             \sin \theta
\end{pmatrix}, \quad \theta \in \Bigl(-\frac{\pi}{2}, \frac{\pi}{2}\Bigr), \; \varphi \in [0,2\pi).
\]
Then we have the tangent spaces
\[T_x\bigl(\mathbb S^2\bigr) = T_{x(\theta,\varphi)}\bigl(\mathbb S^2\bigr)\coloneqq \bigl\{\eta \in \mathbb R^{d+1} : \eta^\tT x = 0\bigr\}
= \operatorname{span}\bigl\{e_1(\theta,\varphi), e_2(\theta,\varphi)\bigr\}\]
with the normed orthogonal vectors
\[
e_1(\theta,\varphi) \coloneqq \frac{\partial x}{\partial \theta} = 
\begin{pmatrix}
- \cos \varphi \sin \theta\\
- \sin \varphi \sin \theta\\
             \cos \theta
\end{pmatrix},
\quad
e_2(\theta,\varphi) \coloneqq \frac{1}{\cos \theta} \frac{\partial x}{\partial \varphi} = 
\begin{pmatrix}
- \sin \varphi \\
  \cos \varphi \\
             0
\end{pmatrix}.
\]
The Riemannian metric is just the Euclidean distance in $\mathbb R^3$.
The geodesic distance is given by
$d_{\mathbb S^2}(x_1,x_2) \coloneqq \arccos \langle x_1, x_2\rangle$,
and the exponential map and (locally) its inverse, resp.,  by
\begin{align}
\exp_{x}(t\eta) \coloneqq \cos\bigl(t\lVert\eta\rVert_2\bigr) x + \sin\bigl(t \lVert\eta \rVert_2\bigr) \frac{\eta}{\lVert\eta\rVert_2},\\
\log_{x_1} x_2 \coloneqq \frac{x_2 -  \langle x_1,x_2\rangle x_1}{\lVert x_2 - \langle x_1,x_2\rangle x_1\rVert_2} \arccos  \langle x_1,x_2\rangle .
\end{align}
%
%-------------------------------------------------------------------------------
\subsection{The Manifold \texorpdfstring{$\mathcal P(r)$}{P(r)} of Symmetric Positive Definite Matrices}\label{A-spd}
%---------------------------------------------------------------------
By $\Exp$ and $\Log$ we denote the matrix exponential and logarithm defined by
\[
\Exp x \coloneqq \sum_{k=0}^\infty \frac{1}{k!} x^k,
\quad
\Log x \coloneqq \sum_{k=1}^\infty \frac{1}{k} (I-x)^k, \quad \rho(I-x) < 1.
\]
Let $\text{Sym} (r)$ denote the space of symmetric $r \times r$ matrices with (Frobenius) inner product and norm
\begin{equation} \label{inner_frob}
\langle A,B \rangle \coloneqq \sum_{i,j=1}^r a_{ij}b_{ij}, \quad \|A\| \coloneqq \left( \sum_{i,j=1} a^2_{ij} \right)^{\frac12}\!.
\end{equation}
Let $\mathcal P(r)$ be the manifold of symmetric positive definite $r \times r$ matrices.
It has the dimension $\text{dim} \, {\mathcal P}(r) = n = \frac{r(r+1)}{2}$.
The tangent space of ${\mathcal P}(r)$ at $x \in {\mathcal P}(r)$ 
is given by 
$T_x {\mathcal P}(r) = \{x\} \times \text{Sym}(r) \coloneqq \bigl\{ x^{\frac12} \eta x^{\frac12} : \eta \in \text{Sym} (r) \bigr\}$,
in particular $T_I {\mathcal P} (r) = \text{Sym} (r)$, where $I$ denotes the $r \times r$ identity matrix.
The  Riemannian metric on $T_x {\mathcal P}$ reads
\begin{equation} \label{rm_spd}
\langle \eta_1,\eta_2 \rangle_x \coloneqq
\text{tr} \bigl(\eta_1 x^{-1} \eta_2 x^{-1}\bigr) =
\bigl\langle x^{-\frac12} \eta_1 x^{-\frac12}, x^{-\frac12} \eta_2 x^{-\frac12}\bigr\rangle,
\end{equation}
where $\langle \cdot,\cdot \rangle$ denotes the matrix inner product \eqref{inner_frob}.
The geodesic distance is given by
$
d_{\mathcal P} (x_1,x_2) \coloneqq \bigl\lVert\Log\bigl(x_1^{-\frac12} x_2 x_1^{-\frac12} \bigr)\bigr\rVert,
$
and the exponential map and its inverse by
\[
\exp_x (t\eta) \coloneqq x^{\frac12} \Exp\bigl(t x^{-\frac12} \eta x^{-\frac12}\bigr) x^{\frac12},
\quad{\rm resp.,}\quad
\log_{x_1} x_2 \coloneqq x_1^\frac12 \Log \bigl(x_1^{-\frac12} x_2 x_1^{-\frac12}\bigr) x_1^{\frac12},
\]
see \cite{SH14}.
\subsection{The Manifold \texorpdfstring{\(\operatorname{SO}(3)\)}{SO(3)} of Rotation Matrices in \texorpdfstring{$\mathbb R^3$}{R3}} \label{app:so3}
The manifold of \(3\times 3\) rotation matrices is defined as
\begin{equation*}
\operatorname{SO}(3) \coloneqq \bigl\{x\in\mathbb R^{3,3}\,\big\vert\,x^\tT x=I \textrm{ and } \det{x} = 1\bigr\}.
\end{equation*}
The geodesic distance between two rotation matrices $x_1,x_2\in \operatorname{SO}(3)$ is given by
\begin{equation*}
d_{\operatorname{SO}(3)}(x_1,x_2) \coloneqq\sqrt{2}\arccos\biggr(\frac{1-\tr(x_1^\tT x_2)}{2}\biggl).
\end{equation*}
The tangential space at $x\in \operatorname{SO}(3)$ reads
\begin{align*}
T_x\operatorname{SO}(3) \coloneqq \bigl\{xv:v\in T_I\operatorname{SO}(3)\bigr\},\quad
T_I\operatorname{SO}(3) \coloneqq \bigl\{ v\in\mathbb R^{3,3}: v+v^\tT=0\bigr\}.
\end{align*}
For \(\eta \in T_x\operatorname{SO}(3)\) the exponential map at $x\in \operatorname{SO}(3)$ and  (locally) its inverse are defined as
\begin{equation*}
\exp_x(\eta) \coloneqq x\Exp\bigl(x^\tT \eta\bigr),\quad{{\rm resp.},} \quad \log_{x_1}x_2 = x_1\Log\bigl(x_1^\tT x_2\bigr).
\end{equation*}
The $\operatorname{SO}(3)$ can be parametrized in various ways.
Due to the form of our data we prefer to use quaternions for the representation 
and the similarity of $\operatorname{SO}(3)$ to the group $\mathbb S^3$, see, e.g., \cite{graef12}:
We decompose the unit quaternions 
$q = \bigl(s,v^\tT \bigr)^\tT \in \mathbb S^3$
into a real part
$s\in \mathbb R$ and a vector part $v\in\mathbb R^3$.
The multiplication of two quaternions $q_1, q_2 \in \mathbb S^3$  is given by
\begin{equation*}
q_1 \circ q_2 \coloneqq\begin{pmatrix}s_1\\v_1\end{pmatrix}\circ \begin{pmatrix}s_2\\v_2\end{pmatrix}
=\begin{pmatrix}s_1s_2-v_1^\tT v_2\\ s_2v_1+s_2v_1+v_1\times v_2\end{pmatrix},
\end{equation*}
with the conjugate quaternion \(\overline{q} \coloneqq \bigl(s,-v^\tT\bigr)^\tT\) as inverse element and unit element $(1,0,0,0)^\tT$.
The quaternions can be identified with the rotations of $\operatorname{SO}(3)$, where
the quaternions $q$ and $-q$ correspond to the same rotation.
More precisely, $\mathbb S^3$ is a double cover of $\operatorname{SO}(3)$, see, \cite[Chap. III, Sect. 10]{bredon93}:
\[
\mathbb S_*^{3} \coloneqq \mathbb S^3/\{-1,1\} \cong \operatorname{SO}(3).
\]
We work with the representative having a positive first component.
With this representation at hand, 
he Riemannian metric of \(\operatorname{SO}(3)\) can be deduced from the Euclidean metric in $\mathbb R^4$.
The geodesic distance can be written as
\begin{equation*}
d_{\mathbb S^{3}_*}(q_1,q_2)= 2 \arccos{\lvert \langle q_1, q_2\rangle\rvert}.
\end{equation*}
The exponential map of $\eta \in T_{p}{\Ss^{3}_*}$ is given by 
\begin{equation*}
	\exp_{q} \eta \coloneqq \sgn{s}\begin{pmatrix}s\\v\end{pmatrix}, \quad (s, v)^\tT \coloneqq q\cos\lVert \eta\rVert_2 + \frac{\eta}{\lVert \eta \rVert_2}\sin\lVert \eta\rVert_2
\end{equation*}
and the logarithmic map at $q_1$ of $q_2$  by 
\begin{equation*}
\log_{q_1}q_2 \coloneqq \frac{q_2-\langle q_2,q_1\rangle q_1}{\lVert q_2-\langle q_2,q_1\rangle q_1\rVert_2}\arccos\bigl(\lvert \langle q_1,q_2\rangle\rvert\bigr)\sgn\bigl(\langle q_1,q_2\rangle\bigr).
\end{equation*}
\\[2ex]
{\bf Acknowledgement:}
Funding by the DFG within the RTG GrK 1932 ``Stochastic Models for Innovations in the Engineering Sciences'', 
project area P3, and in the projects STE 571/13-1 and BE 5888/2-1 is gratefully acknowledged. Furthermore RC gratefully acknowledges support by the HKRGC Grants No. CUHK300614, CUHK2/CRF/11G, AoE/M-05/12; CUHK DAG No. 4053007, and FIS Grant No. 1907303.

\bibliographystyle{abbrv}
\bibliography{references}
\end{document}